\def\1{{\mathbbm{1}}}
\def\a{{\alpha}}
\def\b{{\beta}}
\def\g{{\gamma}}
\def\e{{\epsilon}}
\def\w{{\omega}}
\def\k{{\Bbbk}}
\def\t{{\tau}}
\def\G{{\Gamma}}
\def\BC{{\mathbb{C}}}
\def\BZ{{\mathbb{Z}}}
\def\AA{\mathcal{A}}
\def\CC{\mathcal{C}}
\def\BB{\mathcal{B}}
\def\SS{\mathcal{S}}
\def\Cleft{\text{Cleft}}
\newcommand\SC{\operatorname{SC}}
\newcommand\Rep{\operatorname{Rep}}
\newcommand\id{\operatorname{id}}
\def\to{\rightarrow}
\def\dim{{\mbox{\rm dim}}}
\def\Hom{{\mbox{\rm Hom}}}
\newcommand\enumeri[1]{\begin{enumerate}[label=\rm(\roman*), leftmargin=*] #1 \end{enumerate}}
\newcommand\inv{^{-1}}
\newcommand\x{^{\times}}
\newcommand\ol[1]{\overline{#1}}
\newcommand\ul[1]{\underline{#1}}
\renewcommand\o{\otimes}
\newtheorem{thm}{Theorem}[section]
\newtheorem{prop}[thm]{Proposition}
\newtheorem{lem}[thm]{Lemma}
\theoremstyle{definition}
\newtheorem{defn}[thm]{Definition}
\newtheorem{example}[thm]{Example}
\theoremstyle{remark}
\newtheorem{remark}[thm]{Remark}
\def\namelabel#1#2{\@bsphack
  \protected@write\@auxout{}%
         {\string\newlabel{#1.nme}{{#2}{#2}}}%
  \@esphack}
\title{Cleft Extensions and Quotients of Twisted Quantum Doubles }
\author{Geoffrey Mason}
\address{Department of Mathematics, University of Calfornia at Santa Cruz, CA 95064}
 \email{gem@ucsc.edu}
 \thanks{Research of the first author was partially supported by NSA and NSF}
\author{Siu-Hung Ng}
\address{Department of Mathematics, Louisiana State University, Baton Rouge, LA 70803}
\address{Department of Mathematics, Iowa State University, Ames, IA 50011}
 \email{rng@iastate.edu}
\thanks{The second author was supported by NSF DMS1303253.}
\begin{document}
\maketitle
\begin{abstract}
Given a pair of finite groups $F, G$ and a normalized 3-cocycle $\w$ of $G$, where $F$ acts on $G$ as automorphisms, we consider
quasi-Hopf algebras defined as a cleft extension $\k^G_\w\#_c\,\k F$ where $c$ denotes some suitable cohomological
data.\ When $F\rightarrow \ol{F}:=F/A$ is a quotient of $F$ by a central subgroup $A$ acting trivially on $G$,
we give necessary and sufficient conditions for the existence of a surjection of quasi-Hopf  algebras and cleft extensions of the type $\k^G_\w\#_c\, \k F\rightarrow \k^G_\w\#_{\ol{c}} \, \k \ol{F}$.\ Our construction is particularly natural
 when $F=G$ acts on $G$ by conjugation, and $\k^G_\w\#_c \k G$ is a twisted quantum double $D^{\omega}(G)$.\ In this case, we give necessary and sufficient conditions that Rep($\k^G_\w\#_{\ol{c}} \, \k \ol{G}$) is a modular tensor category.

\end{abstract}
\section{Introduction}
Given finite groups $F, G$ with a right action of $F$ on $G$ as \emph{automorphisms}, one can
form the \emph{cross product} $\k^G\# \k F$, which is naturally a Hopf algebra and a \emph{trivial cleft extension}.\ Moreover, given a normalized 3-cocycle $\w$ of $G$ and
suitable cohomological data $c$, this construction can be `twisted' to yield a quasi-Hopf  algebra
$\k^G_\w\#_c\, \k G$.\ (Details are deferred to  Section \ref{QHs}.)\ For a surjection of groups
$\pi: F\rightarrow \ol{F}$ such that $\ker \pi$ acts trivially on $G$, we consider
 the possibility of constructing another quasi-Hopf algebra $\k^G_\w\#_{\ol{c}}\, \k \ol{F}$ (for suitable data
 $\ol{c}$)  for which there is a `natural' surjection of quasi-Hopf  algebras
 $f: \k^G_\w\#_c\, \k F\rightarrow \k^G_\w\#_{\ol{c}}\, \k \ol{F}$.\ In general such a construction  is not possible.\
 The main result of the present paper (Theorem \ref{thmcext}) gives \emph{necessary and sufficient} conditions for
 the
 existence of  $\k^G_\w\#_{\ol{c}}\, \k \ol{F}$ and $f$ in the important case when $\ker \pi$ is contained in the
 \emph{center} $Z(F)$ of $F$.\ The conditions involve rather subtle cohomological conditions on
 $\ker \pi$; when they are satisfied we obtain interesting new quasi-Hopf  algebras.

 \medskip
 A special case of this construction applies to the twisted quantum double $D^{\omega}(G)$ \cite{DPR},
 where $F=G$ acts on $G$ by conjugation and the condition that $\ker \pi$ acts trivially on $G$
 is \emph{equivalent} to the centrality of $\ker \pi$.\ In this case, we obtain quotients
 $\k^G_\w\#_{\ol{c}}\,\k \ol{G}$ of the twisted quantum double whenever the relevant cohomological conditions hold.\
 Related objects were considered in \cite{GM}, and in the case that $\pm I\in G\subseteq SU_2(\mathbb{C})$ the fusion
 rules were investigated.\ In fact, we can prove
 that the modular data of each of the orbifold conformal field theories $V_{\widehat{\frak{sl}}_2}^{\ol{G}}$, where
 $\widehat{\frak{sl}}_2$ is the level $1$ affine Kac-Moody Lie algebra of type
 $\frak{sl}_2$ and $\ol{G}=G/\pm I$, are reproduced
 by the modular data of  $\k^G_\w\#_{\ol{c}}\,\k\ol{G}$ for suitable choices of cohomological data $\w$ and $\ol{c}$.\
 This result will be appear elsewhere.

 \medskip
 The paper is organized as follows.\ In Section \ref{QHs} we introduce a category
 associated to a fixed quasi-Hopf algebra $\k_{\omega}^G$ whose objects are the cleft extensions we are interested
 in.\ In Section \ref{CQs} we focus on central extensions and establish the main existence result (Theorem \ref{thmcext}).\ In Sections \ref{DG} and \ref{Smodtc} we consider the special case of twisted quantum doubles.\ The main result here (Theorem \ref{t:modular}) gives necessary and sufficient conditions for
  Rep($\k^G_\w\#_{\ol{c}}\,\k\ol{G}$) to be a modular tensor category.

\section{Quasi-Hopf algebras and cleft extensions}\label{QHs}
A quasi-Hopf algebra is a tuple $(H, \Delta, \epsilon, \phi, \alpha, \beta, S)$ consisting of
a quasi-bialgebra $(H, \Delta, \epsilon, \phi)$ together with an antipode $S$ and distinguished elements
$\alpha, \beta \in H$ which together satisfy various consistency conditions.\
See, for example, \cite{D}, \cite{K}, \cite{MN1}. A Hopf algebra is a quasi-Hopf algebra with
$\alpha=\beta=1$ and trivial Drinfel'd associator $\phi=1\otimes1\otimes 1$. As long as
$\alpha$ is invertible,  $(H, \Delta, \epsilon, \phi, 1, \beta\alpha^{-1}, S_{\alpha})$ is also a quasi-Hopf algebra
for some antipode $S_\a$ (\cite{D}).\ All of the examples of quasi-Hopf algebras in the present paper,
constructed from data associated to a group, will satisfy the condition $\a =1$.

\medskip
Suppose that $G$ is a finite group, $\k$ a field, and $\omega \in Z^3(G, \k\x)$ a
normalized (multiplicative) $3$-cocycle.\ There are several well-known quasi-Hopf algebras associated to this data.\
The group algebra $\k G$ is a Hopf algebra, whence
it is a quasi-Hopf algebra too.\ The dual group algebra is also a quasi-Hopf algebra
$\k^G_\omega$ when equipped with the Drinfel'd associator
\begin{equation}\label{eq:associator}
\phi=\sum_{a,b, c \in G} \w(a,b,c)\inv e_a \o e_b \o e_c,
\end{equation}
where $\{e_a\mid a \in G\}$ is the basis of $\k^G$ dual to the basis of group elements $\{a\mid a \in G\}$ in $\k
G$.\ Here, $\b = \sum_{a \in G} \w(a, a\inv, a) e_a$ and $S(a)= a\inv$ for $a \in G$.\ In particular, $\k^G = \k_1^G$
is the usual dual Hopf algebra of $\k G$.

\medskip
We are particularly concerned with \emph{cleft extensions} determined by
a pair of finite groups $F, G$.\ We assume that there is a right action $\triangleleft$  of $F$ on $G$ as
\emph{automorphisms} of $G$.\
The right $F$-action induces a natural left $\k F$-action on $\k^G$, making $\k^G$ a left $\k F$-module algebra.\ If
we consider $\k F$ as a trivial $\k^G$-comodule (i.e., $G$ acts trivially on $\k F$), then $(\k F, \k^G)$ is a
\emph{Singer pair}.\ Throughout this paper, only these special kinds of Singer pairs will be considered.

\medskip
A \emph{cleft object} of $\k^G_\w$ (or simply $G$) consists of a triple $c=(F, \g, \theta)$ where $c_0=F$ is a group
with a right action $\triangleleft$  on $G$ as automorphisms, and $c_1=\g \in C^2(G, (\k^F)\x), c_2 =\theta \in
C^2(F,  (\k^G)\x)$ are normalized $2$-cochains.\ They are required to satisfy the following conditions:
\begin{equation}\label{eq:2cocycle1}
 \theta_{g \triangleleft x}(y,z) \theta_g ( x, y z) =  \theta_g (x y,z) \theta_g ( x, y ),
 \end{equation}
 \begin{equation} \label{eq:2cocycle12}
 \g_x(gh, k) \g_x(g, h) \w(g \triangleleft x,h \triangleleft x ,k \triangleleft x)  = \g_x(h, k) \g_x(g, hk)
 \w(g,h,k),
  \end{equation}
   \begin{equation}\label{eq:2cocycle3}
   \frac{\g_{xy}(g, h)} {\g_x(g, h) \g_y(g \triangleleft x, h \triangleleft x)}=  \frac{\theta_g( x, y) \theta_h(x,
   y)}{\theta_{gh}(x, y)}
\end{equation}
where $\theta_g(x,y) := \theta(x,y)(g)$, $\g_x(g,h):=\g(g,h)(x)$ for $x,y \in F$ and $g,h \in G$.

\medskip
 Associated to a cleft object $c$ of $G$ is a quasi-Hopf algebra
 \begin{eqnarray}\label{cleftqH}
 H=\k^G_\w \#_{c}\, \k F
 \end{eqnarray}
 with underlying linear space $\k^G \o \k F$; the ingredients necessary to define the quasi-Hopf algebra
 structure are as follows:
 $$
e_g x \cdot e_h y =\delta_{g \triangleleft x, h}\,\theta_g(x,y)\, e_g xy, \quad 1_H=\sum_{g \in G} e_g , \,
$$
$$
\Delta(e_g x) = \sum_{ab=g} \g_x(a,b) e_a x \o e_b x, \quad \e(e_g x) =\delta_{g,1}\,,
$$
$$
S(e_g x) = \theta_{g\inv}(x, x\inv)\inv \g_x(g, g\inv)\inv e_{g\inv \triangleleft x} x\inv,
$$
$$
\a=1_H, \quad \b = \sum_{g\in G}\w(g, g\inv, g) e_g,
$$
where  $e_g x \equiv e_g \o x$ and $e_g \equiv e_g \o 1_F$.\ The Drinfel'd associator $\phi$ is again given by
(\ref{eq:associator}).\ This quasi-Hopf algebra is also called the \emph{cleft extension} of $kF$ by $\k^G_\w$ (cf.\ \cite{Masu02}).\ The proof that (\ref{cleftqH}) is indeed a quasi-Hopf algebra when equipped with these structures is
rather routine, and is similar to that of the \emph{twisted quantum double} $D^{\omega}(G)$, which is the case
when $F=G$ and the action on $G$ is conjugation (\cite{DPR}, \cite{K}).\ We shall return to this
example in due course.\ Note that these cleft extensions admit the canonical morphisms of quasi-Hopf algebras
\begin{eqnarray}\label{CGwobj}
 \k^G_\w \stackrel{i}{\to}  \k^G_\w \#_{c}\, \k F \stackrel{p}{ \to} \k F
\end{eqnarray}
where
$$
i(e_g) =e_g, \quad p(e_g x) = \delta_{g,1} x.
$$

\bigskip
Introduce the category $\Cleft(\k^G_\w)$ whose objects are the  cleft objects of $\k_\w^G$; a morphism from $c =(F,
\g, \theta)$ to
$c' =(F', \ol \g, \ol \theta)$ is a pair $(f_1, f_2)$ of quasi-bialgebra  homomorphisms satisfying that
\begin{enumerate}
  \item[(i)] $f_2$ preserves the actions on $G$, i.e. $g \triangleleft x = g \triangleleft f_2(x)$, and
  \item[(ii)] the diagram
  $$
  \xymatrix{
  \k^G_\w \ar[r]^-{i} \ar[d]^-{\id} &  \k^G_\w \#_{c}\, \k F \ar[r]^-{p} \ar[d]^-{f_1} & \k F \ar[d]^-{f_2} \\
   \k^G_\w \ar[r]^-{i} &  \k^G_\w \#_{c'} \k F' \ar[r]^-{p'} & \k F'
  }
  $$
  commutes.
\end{enumerate}
It is worth noting that $\Cleft(\k^G_\w)$ is essentially the category of cleft extensions of group algebras by
$\k_\w^G$.
\begin{remark}\label{r:grading}
The quasi-Hopf algebra $\k^G_\w \#_{c}\, \k F$ also admits a natural $F$-grading which makes it an $F$-graded algebra.\ This $F$-graded structure can be described in terms of  the $\k F$-comodule via the structure map
$\rho_c = (\id \o p)\Delta$\,.\ A morphism $(f_1, f_2): c \to c'$ in $\Cleft(\k_\w^G)$ induces the right $\k
F'$-comodule structure $\rho'_c = (\id \o f_2) \rho_c$ on $\k^G_\w \#_{c}\, \k F$, and $f_1:  \k^G_\w \#_{c}\, \k F
\to \k^G_\w \#_{c'}\, \k F'$ is then a right $\k F'$-comodule map.\ In the language of group-grading, $f_2$ induces an
$F'$-grading on $\k^G_\w \#_{c}\, \k F$ and $f_1$ is an $F'$-graded linear map.\ Since $f_1$ is an algebra map and preserves $F'$-grading, $f_1(e_g x) = \chi_x(g) e_g \ol x$ for some scalar
$\chi_x(g)$, where $\ol x = f_2(x) \in F'$ for $x \in F$. 
\end{remark}

\begin{remark}
  In general, a quasi-bialgebra  homomorphism between two quasi-Hopf algebras is \emph{not} a quasi-Hopf algebra
  homomorphism.\ However, if $(f_1, f_2)$ is a morphism in $\Cleft(\k_\w^G)$, then both $f_1$ and $f_2$ are
  quasi-Hopf algebra homomorphisms.\ We leave this observation as an exercise to readers (cf.\ \eqref{eq:eq1} and
  \eqref{eq:eq2} in the proof of Theorem \ref{thmcext} below).
\end{remark}

In $\Cleft(\k_\w^G)$, there is a trivial object $\ul 1$ in which the group $F$ is trivial and $\theta, \g$ are both
identically $1$.\ This cleft object is indeed the trivial cleft extension of $\k_\w^G$: $\k^G_\w
\stackrel{\id}{\rightarrow} \k^G_\w\stackrel{\epsilon}{\rightarrow} \k$. It is straightforward to check that $\ul 1$
is an initial object of $\Cleft(\k_\w^G)$.

\medskip
 Suppose we are given a cleft object $c =(F, \g, \theta)$ and a quotient map $\pi_{\bar{F}} : F \to \ol F$ of $F$ which
 preserves  their actions on $G$.\ We ask the following question: is there a cleft object $\ol c = (\ol F, \ol \g,
 \ol\theta)$ of $\k_\w^G$ and a quasi-bialgebra  homomorphism $\pi : \k_\w^G \#_c\, \k F  \to \k_\w^G \#_{\ol c}\, \k
 \ol F$ such that
$(\pi, \pi_{\bar{F}}): c \to \ol c$ is a morphism of $\Cleft(\k_\w^G)$?\ Equivalently, the diagram
\begin{equation}\label{diagcleft}
  \xymatrix{
  \k^G_\w \ar[r]^-{i} \ar[d]^-{\id} &  \k^G_\w \#_{c}\, \k F \ar[r]^-{p} \ar[d]^-{\pi} & \k F \ar[d]^-{\pi_{\bar{F}}}
  \\
   \k^G_\w \ar[r]^-{i} &  \k^G_\w \#_{\ol c}\, \k \ol F  \ar[r]^-{\ol p} & \k \ol F}
 \end{equation}
commutes.\ Generally, one can expect the answer to this question to be `no'.\ In the following section, we will
provide a complete answer in an important special case.

\section{Central quotients}\label{CQs}
Throughout this section we assume $\k$ is a field of \emph{any} characteristic,  $c= (F, \g, \theta)$ an object
of $\Cleft(\k_\w^G)$ with the associated quasi-Hopf algebra monomorphism $i:\k_\w^G \to \k_\w^G \#_{c}\, \k F$ and
epimorphism $p:\k_\w^G \#_{c}\, \k F  \to \k F$.\ We use the same notation as before, and  write $H=\k_\w^G
\#_{c}\, \k F$.

\medskip
 We now suppose that $A\subseteq Z(F)$ is a \emph{central} subgroup of $F$ such that
 restriction of the $F$-action $\triangleleft$ on $G$ to $A$ is \emph{trivial}.\ Then the quotient group $\ol F =F/A$
 inherits the right action, giving rise to an induced Singer pair $(\k \ol F, \k^G)$.\ With this set-up,
 we will answer the question raised in the previous section about the existence of
 the diagram (\ref{diagcleft}).\ To explain the answer, we need some preparations.

 \begin{defn}\label{defn1}
   \enumeri{
   \item $0\not= u \in H$ is \emph{group-like} if $\Delta(u)=u \o u$.\ The sets of  group-like elements and central
       group-like elements of $H$ are denoted by $\G(H)$ and  $\G_0(H)$ respectively.
   \item $x \in F$ is called $\g$-\emph{trivial} if $\g_x\in B^2(G, \k\x)$ is a 2-coboundary.\ The set of
       $\g$-trivial elements is denoted by $F^\g$.
   \item $a \in F$ is
    \emph{$c$-central} if there is $t_a \in C^1(G, \k\x)$ such that
  \begin{equation}\label{eq:c0}
     \sum_{g \in G} t_a(g) e_g a \in \G_0(H)\,.
  \end{equation}
   The set of $c$-central elements is denoted by $Z_{c}(F)$.}
 \end{defn}

Let $\hat G = \Hom(G, \k^\times)$ be the  group of linear characters of $G$. The following lemma concerning the sets $F^\g$, $\G(H)$ and $\hat G$ is similar to an observation in \cite{MN2}. 
\begin{lem}\label{lemmamixgps}
The following statements concerning  $F^\g$ and $\G(H)$ hold.
\enumeri{
\item
$F^{\gamma}$ is a subgroup of $F$,  $\G(H)$ is a subgroup of the group of units in $H$, and
$p(\G(H)) = F^\g$. Moreover, for $x \in F^\g$ and $t_x \in C^1(G, \k\x)$, 
$$
\sum_{g \in G} t_x(g) e_g x \in \G(H) \text{ if, and only if, } \delta t_x = \g_x\,.
$$
\item The sequence of groups
    \begin{equation}\label{eq:extension1}
     1 \to \hat G \xrightarrow{i}  \G(H) \xrightarrow{p} F^\g \to 1
    \end{equation}
    is exact.
    The $2$-cocycle $\b \in Z^2(F^\g, \hat G)$ associated with the section $x \mapsto \sum_{g \in G} t_x(g) e_g x$ of $p$ in (\ref{eq:extension1}) is given by
\begin{equation}\label{eq:2-cocycle}
\b(x,y)(g) =\frac{t_x(g)t_y(g\triangleleft x)}{t_{xy}(g)}\theta_g(x,y)\ \ (x,y \in F^\g, g \in G).
\end{equation}
}
\end{lem}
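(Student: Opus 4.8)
The plan is to verify each assertion by direct computation with the explicit formulas for $\Delta$, multiplication, and $p$ on $H=\k_\w^G\#_c\,\k F$. First I would settle part (i). Given a putative group-like $u\neq 0$, I would use the $F$-grading from Remark \ref{r:grading}: applying $p$ and using that $\Delta(u)=u\o u$ forces $p(u)$ to be group-like in $\k F$, hence $p(u)\in F$ up to scalar, so $u$ is supported in a single graded component, i.e. $u=\sum_{g}t(g)e_g x$ for some $x\in F$ and some function $t\colon G\to\k$. Plugging this into $\Delta(u)=u\o u$ and comparing coefficients of $e_a x\o e_b x$ via the coproduct formula $\Delta(e_gx)=\sum_{ab=g}\g_x(a,b)e_ax\o e_bx$ yields $t(ab)\g_x(a,b)=t(a)t(b)$; in particular $t$ is nowhere zero (take $a$ or $b$ equal to $1$ and use normalization) and $\g_x=\delta t$ as a $2$-cochain, which simultaneously proves $x\in F^\g$, gives the stated ``if and only if'' criterion, and shows $p(\G(H))\subseteq F^\g$. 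For the reverse inclusion, given $x\in F^\g$ choose $t_x$ with $\delta t_x=\g_x$ and check the same identity runs backwards. Closure of $\G(H)$ under multiplication then follows from the product rule $e_gx\cdot e_hy=\delta_{g\triangleleft x,h}\theta_g(x,y)e_gxy$ together with the cocycle relations \eqref{eq:2cocycle1}--\eqref{eq:2cocycle3}; this computation simultaneously produces the cocycle formula \eqref{eq:2-cocycle}, since if $u_x=\sum_g t_x(g)e_gx$ and $u_y=\sum_g t_y(g)e_gy$ are group-like then $u_xu_y=\sum_g t_x(g)t_y(g\triangleleft x)\theta_g(x,y)\,e_g\,xy$, and comparing this with $u_{xy}=\sum_g t_{xy}(g)e_g\,xy$ forces $\b(x,y)(g)=t_x(g)t_y(g\triangleleft x)\theta_g(x,y)/t_{xy}(g)$ to be the correction factor. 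That $F^\g$ is a subgroup is then immediate (products and inverses of $\g$-trivial elements are $\g$-trivial, either directly from \eqref{eq:2cocycle3} or because they lift to $\G(H)$).

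Next I would handle part (ii), the exactness of
\[
1\to\hat G\xrightarrow{i}\G(H)\xrightarrow{p}F^\g\to 1.
\]
Exactness at $F^\g$ is the surjectivity $p(\G(H))=F^\g$ just established. The map $i$ sends a character $\chi\in\hat G$ to $\sum_g\chi(g)e_g\in H$; it is group-like because $\Delta(e_g)=\sum_{ab=g}e_a\o e_b$ (the $x=1$ case, with $\g_1$ normalized to $1$) and $\chi$ multiplicative, and it is injective since $\chi\mapsto\sum\chi(g)e_g$ is injective on the idempotents. Exactness at $\hat G$ (injectivity of $i$) is thus clear, and exactness at $\G(H)$ amounts to: $u=\sum_g t(g)e_gx$ lies in $\ker p$ iff $x=1$, in which case $\delta t=\g_1=1$ says $t$ is a character, i.e. $u\in i(\hat G)$. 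Finally, for the $2$-cocycle associated to the section $s(x)=\sum_g t_x(g)e_gx$: by definition $\b(x,y)=s(x)s(y)s(xy)^{-1}$ computed inside the abelian kernel $\hat G$, and the product computation from the previous paragraph gives exactly \eqref{eq:2-cocycle}. One should check $\b$ is indeed a normalized $2$-cocycle valued in $\hat G$ — i.e. that $\b(x,y)\in\hat G$ for $x,y\in F^\g$, which follows because $s(x)s(y)$ and $s(xy)$ are both group-like lifts of $xy$ so their ratio is in the kernel — and that the $\hat G$-module structure on which the cocycle condition is checked is the one induced by $F^\g$ acting on $G$ via $\triangleleft$.

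The main obstacle I anticipate is purely bookkeeping rather than conceptual: keeping the twisting data $\theta$, $\g$, and $\w$ consistently tracked through the associativity and coassociativity identities, particularly verifying that the identity $t(ab)\g_x(a,b)=t(a)t(b)$ obtained from coassociativity is exactly the statement $\delta t_x=\g_x$ with the sign/variance conventions the paper uses for $\delta$ on $C^*(G,\k^\times)$, and confirming that the $\b$ of \eqref{eq:2-cocycle} satisfies the $2$-cocycle condition in $Z^2(F^\g,\hat G)$ — this last is not automatic from the group extension alone unless one checks the associator $\phi$ from \eqref{eq:associator} does not interfere, which it does not precisely because all the relevant elements are group-like and $\phi$ is grouplike-symmetric on the relevant components. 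Since the lemma is stated to be ``similar to an observation in \cite{MN2}'', I expect these verifications to go through cleanly once the conventions are pinned down.
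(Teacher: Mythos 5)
Your proposal is correct and is essentially the argument the paper intends: the paper's own proof is only the remark that it is ``similar to Lemma 3.3 in \cite{MN2}'', and your direct verification (homogeneity of group-like elements via the $F$-grading, the identity $t(ab)\g_x(a,b)=t(a)t(b)$ giving the criterion $\delta t_x=\g_x$, and the factor-set computation yielding \eqref{eq:2-cocycle}) is exactly that verification. Two tiny points to tidy rather than gaps: nowhere-vanishing of $t$ needs the choice $b=a\inv$ (giving $t(a)t(a\inv)=\g_x(a,a\inv)\neq 0$) in addition to normalization at $1$, and the worry about the associator is vacuous, since $H$ is a strictly associative algebra ($\phi$ only twists coassociativity), so once one notes that each group-like $\sum_g t(g)e_g x$ has the explicit group-like inverse $\sum_g \bigl(t(g\triangleleft x\inv)\theta_{g\triangleleft x\inv}(x,x\inv)\bigr)\inv e_g x\inv$, the $2$-cocycle condition for $\b$ is automatic from the group extension structure of $\G(H)$ alone.
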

\begin{proof}
The proofs of (i) and (ii) are similar to Lemma 3.3 in \cite{MN2}.
\end{proof}
\begin{remark}
 \eqref{eq:extension1} is a \emph{central extension} if $F$ acts trivially on $\hat{G}$, but in general it is
 \emph{not} a central extension.
 \end{remark}
 \begin{remark}\label{r:c-central} If $a \in Z_c(F)$, then a central  group-like element $\sum_{g \in G} t_a(g) e_g a \in \G_0(H)$
 will be mapped to the central element $a$ in $\k F$ under $p$.\ Therefore, by Lemma \ref{lemmamixgps}, we always have $Z_{c}(F) \subseteq Z(F)\cap F^\g$.\
 By direct computation,  the condition \eqref{eq:c0} for $a \in Z_{c}(F)$ is equivalent to the conditions:\
   $$
 \delta t_a = \g_a, \, t_a(g) \theta_g(a,y) = t_a(g \triangleleft y) \theta_g(y,a)\ \text{and}\  g \triangleleft a=g
   \ \ (g \in G, y \in F).
   $$
   In particular, $\theta_g(a,b) = \theta_g(b,a)$ for all $a, b \in Z_c(F)$.
\end{remark} 
 
By Lemma \ref{lemmamixgps}, we can parameterize the elements $u=u(\chi, x) \in \G(H)$ by $(\chi, x) \in \hat G
\times F^\g$.\ More precisely, for a fixed family of 1-cochains $\{t_x\}_{x \in F^\g}$ satisfying $\delta t_x = \g_x$,  every element $u \in \G(H)$ is uniquely determined by a pair  $(\chi, x) \in \hat G
\times F^\g$ given by
$$
u=u(\chi, x) = \sum_{g \in G} \chi(g) t_x(g) e_g x.
$$
 Note that a choice of such a family of 1-cochains $\{t_x\}_{x \in F^\g}$ satisfying $\delta t_x = \g_x$ is equivalent to a section of $p$ in \eqref{eq:extension1}.\ With this convention we
have $i(\chi) = u(\chi, 1)$ and $p(u(\chi, x)) =x$ for all $\chi \in \hat{G}$ and $x \in F^\g$.
\begin{lem}\label{l:subext1}
The set $Z_{c}(F)$ of $c$-central elements is a subgroup of $Z(F)$,\ and it acts trivially on $G$.\ Moreover,
 $\G_0(H)$ is a central extension of $Z_c(F)$ by $\hat{G}^F$ via the exact sequence:
     \begin{equation}\label{eq:extension0}
     1 \to \hat{G}^F \xrightarrow{i}  \G_0(H) \xrightarrow{p} Z_{c}(F) \to 1\,,
    \end{equation}
    where $\hat{G}^F$ is the group of $F$-invariant linear characters of $G$.

    If we choose $t_x$  such that $u(1, x) \in \G_0(H)$ whenever $x \in Z_{c}(F)$, then the formula
    (\ref{eq:2-cocycle}) for $\b(x,y)$ defines a 2-cocycle for the exact sequence (\ref{eq:extension0}).
\end{lem}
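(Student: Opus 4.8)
The plan is to deduce everything from Lemma~\ref{lemmamixgps}, Remark~\ref{r:c-central}, and one short centrality computation inside $H$. First I would dispose of the claims that $Z_c(F)$ is a subgroup of $Z(F)$ acting trivially on $G$: Remark~\ref{r:c-central} already records $Z_c(F)\subseteq Z(F)\cap F^\g$ together with $g\triangleleft a = g$ for all $a\in Z_c(F)$, $g\in G$, so the only thing left is closure under products and inverses. For this I would observe that $\G_0(H) = \G(H)\cap Z(H)$ is a subgroup of $\G(H)$ --- the latter is a subgroup of the units of $H$ by Lemma~\ref{lemmamixgps}(i), and the centre of an algebra is closed under products and under inversion of invertible elements --- and then note that $Z_c(F)$ is, by Definition~\ref{defn1}(iii), exactly the image $p(\G_0(H))$, hence a subgroup.

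Next I would establish the exact sequence \eqref{eq:extension0} by restricting \eqref{eq:extension1} to $\G_0(H)$. Surjectivity of $p$ onto $Z_c(F)$ is immediate from the definition of $c$-central element. The crux is the kernel: a short computation with the product rule of $H$ and the normalisation of $\theta$ gives, for $\chi\in\hat G$,
$$
i(\chi)\cdot e_h y = \chi(h)\,e_h y , \qquad e_h y\cdot i(\chi) = \chi(h\triangleleft y)\,e_h y \qquad (h\in G,\ y\in F),
$$
and since the $e_h y$ span $H$ this shows $i(\chi)\in Z(H)$ if and only if $\chi$ is $F$-invariant. Hence $\ker\bigl(p|_{\G_0(H)}\bigr) = i(\hat G)\cap\G_0(H) = i(\hat G^F)$, which gives exactness, and since $i(\hat G^F)\subseteq Z(H)$ the extension \eqref{eq:extension0} is central.

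Finally, for the $2$-cocycle statement I would first note that the stated choice of $\{t_x\}$ is legitimate: for each $x\in Z_c(F)$, Definition~\ref{defn1}(iii) supplies some $t_x\in C^1(G,\k\x)$ with $u(1,x)\in\G_0(H)$, and such a $t_x$ automatically satisfies $\delta t_x = \g_x$ by Lemma~\ref{lemmamixgps}(i); the family is then extended over $F^\g$ arbitrarily subject to $\delta t_x = \g_x$. With this choice $s(x):=u(1,x)$ is a set-theoretic section of $p$ in \eqref{eq:extension0}, its associated factor set is characterised by $i(\b(x,y)) = s(x)s(y)s(xy)\inv$, and this product lies in $i(\hat G)\cap\G_0(H) = i(\hat G^F)$, so $\b$ takes values in $\hat G^F$; the same elementary computation as in Lemma~\ref{lemmamixgps}(ii) evaluates $\b(x,y)$ by formula \eqref{eq:2-cocycle}. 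Because $Z_c(F)$ acts trivially on $\hat G^F$, this $\b$ is an honest element of $Z^2(Z_c(F),\hat G^F)$ classifying the central extension \eqref{eq:extension0}, the cocycle identity being the restriction of the one for \eqref{eq:extension1} (equivalently, the standard identity satisfied by a factor set).

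I expect the centrality computation for $i(\chi)$ to be the only step carrying genuine content: it simultaneously pins down the kernel $\hat G^F$ of \eqref{eq:extension0} and forces $\b$ to land in $\hat G^F$. The one point to watch is the compatibility of the two conditions imposed on $\{t_x\}$ for $x\in Z_c(F)$, namely $\delta t_x = \g_x$ and centrality of $u(1,x)$; but this compatibility is exactly what the definition of $Z_c(F)$ guarantees, so no extra argument is needed, and the remainder is bookkeeping within the framework already in place.
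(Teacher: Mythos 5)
Your proposal is correct and follows essentially the same route as the paper: it reduces everything to Lemma~\ref{lemmamixgps} and Remark~\ref{r:c-central}, identifies $Z_c(F)=p(\G_0(H))$ to get the subgroup property, pins down $\ker(p|_{\G_0(H)})=i(\hat G^F)$ (your direct computation of $i(\chi)$ against the basis $e_h y$ is just the $x=1$ case of the condition recorded in Remark~\ref{r:c-central}, which is how the paper phrases it), and obtains the cocycle statement by restricting the section and factor set of \eqref{eq:extension1} to the sub-extension. No gaps worth flagging.
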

\begin{proof}
By Lemma \ref{lemmamixgps} and the preceding paragraph,  $u(\chi,x) \in \G_0(H)$ for some $\chi \in \hat G$ if, and
only if,  $x \in Z_c(F)$.\ In particular, $p(\G_0(H))=Z_c(F)$.\ It follows from Remark \ref{r:c-central} that $Z_c(F)$ is a subgroup of
$F^\g \cap Z(F)$
 and $Z_{c}(F)$ acts trivially on $G$.\ By Remark \ref{r:c-central} again, $u(\chi, 1) \in \G_0(H)$ is equivalent to
  $$
  \chi(g) t_1(g) \theta_g(1,y) = \chi(g \triangleleft y) t_1(g \triangleleft y) \theta_g(y,1) \text{ for all } g \in G,
  y \in F.
   $$
 In particular, $\hat{G}^F = \ker p|_{\G_0(H)}$, and this establishes the exact sequence \eqref{eq:extension0}. If
 $t_x$ is chosen such that $u(1,x) \in \G_0(H)$ whenever $x \in Z_c(F)$, the second statement follows immediately
 from Lemma \ref{lemmamixgps} (ii) and the commutative diagram:
 $$
  \xymatrix{
  1 \ar[r] & \hat G \ar[r]^-{i} & \G(H) \ar[r]^-{p} & F^\g \ar[r] & 1 \\
  1 \ar[r] & \hat G^F\ar[u]_-{incl} \ar[r]^-{i} & \G_0(H)\ar[u]_-{incl} \ar[r]^-{p} & Z_c(F)\ar[u]_-{incl} \ar[r] &
  1
  }\quad\raisebox{-42pt}{.  \qedhere}
$$
\end{proof}

\begin{thm}\label{thmcext} Let the notation be as before, with $A\subseteq Z(F)$ a subgroup acting trivially on $G$,
and with the right action of $\ol F = F/A$ on $G$ inherited from that of $F$.\ Then the following statements are
equivalent:
 \enumeri{
 \item
 There exist a cleft object $\ol c=(\ol F, \ol \g, \ol \theta)$ of $\k_\w^G$ and a quasi-bialgebra map $\pi: \k^G_\w
 \#_{c}\, \k F \to \k^G_\w \#_{\ol c}\, \k \ol F$ such that the diagram
  \begin{equation}\label{eq:cd1}
  \xymatrix{
  \k^G_\w \ar[r]^-{i} \ar[d]^-{\id} &  \k^G_\w \#_{c}\, \k F \ar[r]^-{p} \ar[d]^-{\pi} & \k F \ar[d]^-{\pi_{\ol F}}
  \\
   \k^G_\w \ar[r]^-{i} &  \k^G_\w \#_{\ol c}\, \k \ol F \ar[r]^-{p'} & \k \ol F
   }
 \end{equation}
commutes.
 \item $A \subseteq Z_{c}(F)$ and the subextension
 $$
 1 \to \hat G^F \xrightarrow{i} p|_{\G_0(H)}\inv(A) \xrightarrow{p} A \to 1
 $$
 of \eqref{eq:extension0} splits.
 \item $A \subseteq Z_{c}(F)$ and there exist $\{t_a\}_{a \in A}$ in $C^1(G, \k\x)$ and $\{\tau_g\}_{g \in G}$ in
     $C^1(A, \k\x)$ such that $\delta t_a = \g_a$, $\delta \tau_g = \theta_g |_A$ and
     $$
    s_a(g)= t_a(g) \tau_g(a)
     $$
     defines a $F$-invariant linear character on $G$ for all $a \in A$.
 }
  \end{thm}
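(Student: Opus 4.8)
The plan is to prove the chain of implications (i) $\Rightarrow$ (ii) $\Rightarrow$ (iii) $\Rightarrow$ (i). First I would tackle (i) $\Rightarrow$ (ii). Given the commuting diagram \eqref{eq:cd1}, Remark \ref{r:grading} applies: since $\pi$ is an algebra map compatible with the $\ol F$-gradings, we have $\pi(e_g x) = \chi_x(g) e_g \ol x$ for scalars $\chi_x(g)$, where $\ol x = \pi_{\ol F}(x)$. For $a \in A$ we have $\ol a = 1$ in $\ol F$, so $\pi$ carries $\sum_g \chi_a(g) e_g a$ (or rather the element $u(1,a)$ after absorbing the chosen $t_a$) into $\k^G_\w \o 1 = i(\k^G_\w) \subseteq \G(\k^G_\w\#_{\ol c}\k\ol F)$. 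Tracing through: $\pi$ must send group-like elements of $H$ over $a$ to group-like elements of $\k^G_\w\#_{\ol c}\k\ol F$ over $1$, i.e.\ to elements of $i(\hat G)$; moreover, since $a$ is central in $F$ and $\pi$ respects multiplication, the image (and hence a suitable preimage) is central, forcing $a \in Z_c(F)$, and in fact $\pi$ restricted to $p|_{\G_0(H)}^{-1}(A)$ composed with $i^{-1}$ gives a splitting homomorphism $p|_{\G_0(H)}^{-1}(A) \to \hat G^F$ of the inclusion $i$, which is precisely the splitting of the subextension in (ii). I should be careful that $\pi$ restricted to group-likes really lands where claimed; this uses that $\pi$ is a coalgebra map up to the associator and that the associator $\phi$ of $\k^G_\w\#_{\ol c}\k\ol F$ lives in $i(\k^G_\w)^{\o 3}$, so group-likeness is detected on the nose.

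Next, (ii) $\Leftrightarrow$ (iii) is essentially a translation of the splitting condition into cochain data using Remark \ref{r:c-central} and the explicit $2$-cocycle \eqref{eq:2-cocycle}. A splitting of $1 \to \hat G^F \to p|_{\G_0(H)}^{-1}(A) \to A \to 1$ is a choice, for each $a \in A$, of an element $u_a = u(\chi_a, a) \in \G_0(H)$ with $u_a u_b = u_{ab}$ and $u_1 = 1$. Writing $s_a := \chi_a t_a$ (with $t_a$ the chosen $1$-cochains satisfying $\delta t_a = \g_a$), the homomorphism condition, via \eqref{eq:2-cocycle}, becomes $\dfrac{s_a(g) s_b(g)}{s_{ab}(g)} = \theta_g(a,b)$ for all $g \in G$, $a,b\in A$ — that is, $g \mapsto (\,a \mapsto s_a(g)\,)$ together with $\theta|_A$ differ by a coboundary in the $g$-variable, which is exactly the assertion that there exist $\tau_g \in C^1(A,\k\x)$ with $\delta\tau_g = \theta_g|_A$ and $s_a(g) = t_a(g)\tau_g(a)$ — wait, one must disentangle the two variables carefully: $u_a \in \G_0(H)$ forces each $s_a \in C^1(G,\k\x)$ with $\delta s_a = \g_a$ (so $s_a = \chi_a t_a$ with $\chi_a \in \hat G$), and $F$-invariance of the product comes from the $\G_0$ condition of Remark \ref{r:c-central}; the cocycle identity then rearranges to the displayed formula in (iii). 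The main bookkeeping is checking that the $F$-invariance of $s_a$ claimed in (iii) matches $u_a\in\G_0(H)$ rather than merely $u_a\in\G(H)$ — this is where the $\theta_g(a,y) = \theta_g(y,a)\cdot(\text{coboundary})$ condition from Remark \ref{r:c-central} enters.

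Finally, for (iii) $\Rightarrow$ (i), I would \emph{construct} $\ol c = (\ol F, \ol\g, \ol\theta)$ and $\pi$ explicitly. Choose a set-theoretic section $\sigma: \ol F \to F$ of $\pi_{\ol F}$ with $\sigma(1) = 1$, so every $x \in F$ is uniquely $a\,\sigma(\ol x)$ with $a \in A$. Define $\pi(e_g\,a\sigma(\ol x)) := s_a(g)^{-1} e_g\,\ol x$ (using the $F$-invariant characters $s_a$ from (iii) to "divide out" the $A$-part), and define $\ol\g, \ol\theta$ on $\ol F$ by the forced formulas coming from requiring $\pi$ to be an algebra and coalgebra map — concretely $\ol\theta_g(\ol x,\ol y)$ will be $\theta_g(\sigma\ol x,\sigma\ol y)$ corrected by the $s$-factors needed to absorb the discrepancy $\sigma\ol x\,\sigma\ol y = (\text{element of }A)\cdot\sigma(\ol x\ol y)$, and similarly for $\ol\g$. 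I then must verify: (a) $\ol\g,\ol\theta$ are well-defined normalized $2$-cochains satisfying \eqref{eq:2cocycle1}–\eqref{eq:2cocycle3} — this follows because $\g,\theta$ satisfy them and the $s_a$ are genuine characters, so the correction factors are coherent; (b) $\pi$ is a quasi-bialgebra map; (c) diagram \eqref{eq:cd1} commutes, which is immediate from the definition of $\pi$ on $e_g = e_g\cdot 1$ and on $e_1 x$.

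\textbf{Expected main obstacle.} The hardest part will be (iii) $\Rightarrow$ (i): checking that the correction factors built from $\{s_a\}$, $\{t_a\}$, $\{\tau_g\}$ and the section $\sigma$ genuinely descend to well-defined cochains $\ol\g,\ol\theta$ on $\ol F$ satisfying all three compatibility conditions \eqref{eq:2cocycle1}–\eqref{eq:2cocycle3}, independent of the choice of $\sigma$ up to the appropriate equivalence. This is a multi-variable cocycle manipulation where one must repeatedly use that $A$ is central, acts trivially on $G$, and that $s_a$ is an $F$-invariant character; the $F$-invariance is exactly what makes the $\ol F$-action on $G$ well-defined on the nose and keeps the corrected $\ol\theta$ consistent with \eqref{eq:2cocycle1}. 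The implications (i) $\Rightarrow$ (ii) and (ii) $\Leftrightarrow$ (iii) are comparatively formal once Lemmas \ref{lemmamixgps} and \ref{l:subext1} and Remark \ref{r:c-central} are in hand.
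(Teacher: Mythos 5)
Your overall architecture (a cycle of implications with the real content in (iii) $\Rightarrow$ (i), and (ii) $\Leftrightarrow$ (iii) as a cocycle translation) is the same as the paper's, but the explicit construction you propose for (iii) $\Rightarrow$ (i) fails as written. You set $\pi(e_g\,a\sigma(\ol x)) := s_a(g)^{-1} e_g\,\ol x$, i.e.\ the coefficient $\chi_x$ is the \emph{character} $s_a^{-1}$, where $x = a\sigma(\ol x)$. For $\pi$ to be a coalgebra map into a cleft extension fitting the diagram \eqref{eq:cd1}, the forced datum is $\ol\g_{\ol x}(g,h) = \frac{\chi_x(g)\chi_x(h)}{\chi_x(gh)}\,\g_x(g,h)$, and this must be independent of the representative $x$ of the coset $\ol x$. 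Since $s_a \in \hat G$, the correction factor $\frac{\chi_x(g)\chi_x(h)}{\chi_x(gh)}$ is identically $1$, so your forced $\ol\g_{\ol x}$ is just $\g_x$, and well-definedness would require $\g_{ax} = \g_x$ for all $a \in A$. But \eqref{eq:2cocycle3} (with the trivial $A$-action) gives $\g_{ax}(g,h) = \g_a(g,h)\,\g_x(g,h)\,\frac{\theta_g(a,x)\theta_h(a,x)}{\theta_{gh}(a,x)}$ with $\g_a = \delta t_a$ nontrivial in general, so $\ol\g$ is not well defined and $\pi$ is not a coalgebra map. The missing idea is that the coefficient must be a genuinely non-character cochain in $g$ whose coboundary cancels $\g_a$ and whose behaviour under the $F$-action matches $\theta$; the $F$-invariant character $s_a$ contributes nothing on either count. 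The correct choice (unique up to twisting by a homomorphism $A \to \hat G^F$, cf.\ Remark \ref{r:non-uniqueness}) is $\chi_x(g) = \frac{s_a(g)}{t_a(g)\,\theta_g(a, \sigma(\ol x))} = \frac{\tau_g(a)}{\theta_g(a,\sigma(\ol x))}$, which is exactly the paper's \eqref{eq:isomorphism-scalars}; with that choice the descent of \eqref{eq:quo-gamma} and \eqref{eq:quo-theta} is what the verification actually establishes.

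Two smaller points. In (i) $\Rightarrow$ (ii), the inference ``the image is central, hence a suitable preimage is central'' is not valid as stated; one must extract the relations \eqref{eq:eq1} and \eqref{eq:eq2}, evaluate them at $\ol a = 1$ (where $\ol\g_{\ol a}$ and $\ol\theta_\cdot(\ol a,\cdot)$ are trivial), and conclude directly that $\sum_g \chi_a\inv(g) e_g a \in \G_0(H)$; once $A \subseteq Z_c(F)$ is known, your retraction $i\inv \circ \pi$ on $p|_{\G_0(H)}\inv(A)$ does land in $\hat G^F$ and does split the subextension, which is a nice alternative to the paper's device of choosing $t_a = \chi_a\inv$ so that $\b|_A \equiv 1$. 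In (ii) $\Leftrightarrow$ (iii), the homomorphism condition for a section $a \mapsto \sum_g s'_a(g) e_g a$ is $\frac{s'_a(g)s'_b(g)}{s'_{ab}(g)} = \theta_g(a,b)\inv$, not $\theta_g(a,b)$, and the section coefficient is $s'_a = t_a/s_a = \tau_\cdot(a)\inv$, not the character $s_a$ of (iii); your sketch conflates the two, though this part is repairable bookkeeping rather than a conceptual error.
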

\begin{proof} ((i) $\Rightarrow$ (ii)) Suppose there exist a cleft object $\ol c= (\ol F, \ol \g, \ol \theta)$ of
$\k_\w^G$ and
a quasi-bialgebra map $\pi: \k^G_\w \#_{c}\, \k F \to \k^G_\w \#_{\ol c}\, \k \ol F$ such that the diagram
\eqref{eq:cd1}
 commutes.\ Then $\pi(e_g) = e_g$ for all $g \in G$.\ Since $\pi$ is an algebra map, $\pi(e_g x) = \sum_{\ol y \in F}
 \chi_{x}(g, \ol y) e_g \ol y$ for some scalars $\chi_{x}(g, \ol y)$. Here, we simply write $\ol y$ for $\pi_{\ol
 F}(y)$.

 By Remark \ref{r:grading}, $\pi$ is a $\ol F$-graded linear map and so we have $\pi(e_g x) = \chi_{x}(g, \ol x) e_g
 \ol x$. Therefore, we simply denote $\chi_x(g)$ for $\chi_{x}(g, \ol x)$. In particular, $\chi_1=1$ and $\chi_x(1)
 =1$ by the commutativity of \eqref{eq:cd1}.
 Moreover, we find
 \begin{eqnarray}
   \g_x(g,h) \chi_{x}(g) \chi_{x}(h) & = & \ol \g_{\ol x}(g,h) \chi_{x}(gh),  \label{eq:eq1}\\
   \ol \theta_g(\ol x, \ol y) \chi_{x}(g) \chi_{y}(g \triangleleft x) & = & \theta_g(x, y)
   \chi_{xy}(g)\label{eq:eq2}
 \end{eqnarray}
  for all $x,y \in F$ and $g,h \in G$. An immediate consequence of these equations is that $\chi_x \in C^1(G,
  \k^\times)$ for $x \in F$.

  For $a \in A$, $\ol \theta_g(\ol a, \ol y) = \ol \g_{\ol a}(g,h)=1$. Then, \eqref{eq:eq1} and \eqref{eq:eq2} imply
  \begin{equation} \label{eq:eq3}
    \g_a = \delta \chi_a\inv, \quad 1=   \frac{\chi_{ay}(g)} {\chi_a(g) \chi_{y}(g)} \theta_g(a, y)=
  \frac{\chi_{ya}(g)} {\chi_{y}(g) \chi_a(g \triangleleft y)}\theta_g(y,a)
  \end{equation}
  for all $a \in A$, $g \in G$ and $y \in F$.\ These equalities in turn yield
  $$
  \sum_{g \in G} \chi_a\inv (g) e_g a \in \G_0(H)
  $$
  for all $a \in A$.\ Therefore, $A \le Z_{c} (F)$.

  In particular, $A \subseteq F^\g$.\ If we choose $t_a =\chi_a\inv$ for all $a \in A$, then the restriction of the
  2-cocycle $\b$, given in \eqref{eq:2-cocycle}, on $A$ is constant function 1.\ Therefore,  the subextension
  $$
 1 \to \hat G^F \xrightarrow{i} p|_{\G_0(H)}\inv(A) \xrightarrow{p} A \to 1
 $$
 of \eqref{eq:extension0} splits.\\ \\
  ((ii) $\Rightarrow$ (i) and (iii)) Assume $A \subseteq Z_c (F)$ and the restriction of $\b$ on $A$ is a
  coboundary.\ By Remark \ref{r:c-central}, we let $t_a \in C^1(G,\k\x)$ such that $\delta t_a =\g_a$ and
  \begin{equation}\label{eq:c1}
    t_a(g)\theta_g(a,y) = t_a(g \triangleleft y)\theta_g(y, a)
  \end{equation}
  for all $a \in A$, $y \in F$ and $g \in G$.\ In particular,
  $$
\sum_{g \in G} t_a(g) e_g a \in \G_0(H)
  $$
  for all $a \in A$.\
  By Lemma \ref{l:subext1},  $\b(a,b) \in \hat G^F$ for all $a, b \in A$.\ Suppose $\nu=\{\nu_a\mid a \in A\}$ is a family in $\hat
  G^F$ such that $\b(a,b) = \nu_a \nu_b \nu_{ab}\inv$ for all $a, b \in A$.

  Let $\ol r: \ol F \to F$ be a section of $\pi_{\ol F}$ such that $\ol r(\ol 1)=1$.\ For $x \in F$, we set $r(x)=\ol
  r(\ol x)$ and
   \begin{equation} \label{eq:isomorphism-scalars}
     \chi_x(g) = \frac{\nu_a(g)}{t_a(g) \theta_g(a, r(x))}
   \end{equation}
   for all $g \in G$, where $a = x r(x)\inv$.\ It is easy to see that $\chi_1 =1$ and $\chi_x$ is a normalized
   1-cochain of $G$.\ Note that for $b \in A$, $\theta_g(a,b) = \theta_g(b,a)$, so we have
   \begin{equation}\label{eq:c2}
    \frac{\chi_{bx}(g)}{\chi_{b}(g) \chi_{x}(g)} = \frac{\nu_{ab}(g)}{t_{ab}(g) \theta_g(ab, r(x))}
   \frac{t_b(g)} {\nu_b(g)}  \frac{t_a(g) \theta_g(a, r(x))}{\nu_a(g)}=\theta_g(b,x)\inv,
   \end{equation}
   \begin{equation}\label{eq:c3}
  \chi_b(g \triangleleft x)\theta_g(b,x) = \chi_b(g) \theta_g(x, b) \quad\text{and}\quad \delta \chi_b\inv  =\g_b\,.
   \end{equation}

    Let $\tau_g(a) = \chi_a(g)$ for all $a \in A$ and $g \in G$.\ Equation \eqref{eq:c2} implies that $\delta \tau_g
    =\theta_g|_A$ and
   $$
   \nu_a(g) =t_a(g)\tau_g(a)\,,
   $$
   and this proves (iii).

   Define the maps $\ol \g\in  C^2(G, (\k^{\ol F})\x)$ and $\ol \theta \in C^2(\ol F, (\k^G)\x)$ as follows:
   \begin{eqnarray}
   \ol \g_{\ol x}(g,h) & = & \frac{\chi_x(g)\chi_x(h)}{\chi_x(g h)} \g_x(g,h)\,,\label{eq:quo-gamma}\\
   \ol \theta_g(\ol x , \ol y) & = & \frac{\chi_{xy}(g)}{\chi_x(g)\chi_y(g \triangleleft x)} \theta_g(x,y)\,.\label{eq:quo-theta}
   \end{eqnarray}
   We need to show that these functions are well-defined.\ Let $b \in  A$, $x,y \in F$ and $g, h \in G$. By
   \eqref{eq:2cocycle3},  \eqref{eq:c2} and \eqref{eq:c3}, we find
   $$
   \frac{\chi_{bx}(g)\chi_{bx}(h)}{\chi_{bx}(g h)} \g_{bx}(g,h) = \frac{\chi_{x}(g)\chi_{x}(h)}{\chi_{x}(g h)}
   \g_{x}(g,h)\,,
   $$
   and this proves $\ol \g$ is well-defined.\ To show that $\ol \theta$ is also well-defined, it suffices to prove
   $$
   \frac{\chi_{bxy}(g)}{\chi_{bx}(g)\chi_y(g \triangleleft bx)} \theta_g(bx,y) =
   \frac{\chi_{xy}(g)}{\chi_{x}(g)\chi_y(g \triangleleft x)} \theta_g(x,y) =
    \frac{\chi_{xby}(g)}{\chi_{x}(g)\chi_{by}(g \triangleleft x)} \theta_g(x,by)
   $$
   for all $b \in A$, $x,y \in F$ and $g, h \in G$.
   However, the first equality follows from \eqref{eq:c2} and \eqref{eq:2cocycle1} while the second equality is a
   consequence of \eqref{eq:2cocycle1}, \eqref{eq:c2} and \eqref{eq:c3}.

\medskip
It is straightforward to verify that $\ol c=(\ol F, \ol \g, \ol \theta)$ defines cleft object of $\k_\w^G$ and $\pi:
\k^G_\w \#_{c}\,\k F \to \k^G_\w \#_{\ol c}\, \k \ol F, e_g x \mapsto \chi_x(g) e_g \ol x$ defines a quasi-bialgebra
homomorphism  which makes the diagram \eqref{eq:cd1} commute. We leave the routine details to the reader.\\\\
((iii) $\Rightarrow$ (ii)) Since $s_a(g) = t_a(g) \t_g(a)$ defines a $F$-invariant linear character of $G$ for each
$a$, then $\nu(a) = s_a$ defines a 1-cochain in $C^1(A, \hat{G}^F)$ and
$$
\delta \nu = \b|_A
$$
where $\b$ is the 2-cocycle given in \eqref{eq:2-cocycle}. In particular, $\b|_A$ is a coboundary.
\end{proof}
\begin{remark}\label{r:non-uniqueness} 
  Given $A \subseteq Z_c(A)$ satisfying  condition (ii) of the preceding theorem, and $\{t_a\}_{a \in A}$ a fixed family of cohains in $C^1(G, \k^\times)$ such that $\sum_{g \in G} t_a(g) e_g a \in \G_0(H)$ for $a \in A$. The set $\SS(A)$ of group homomorphism sections of $p : p\inv(A) \to A$ is in one-to-one corresponence with $\BB(A) = \{\nu \in C^1(A, \hat G^F)\mid \delta \nu = \b \text{ on }A\}$. For $\nu \in \BB(A)$, it is easy to see that 
  $$
  \tilde{p}_\nu (a) = \sum_{g \in G} \frac{t_a(g)}{\nu(a)(g)}\, e_g a \quad (a \in A)
  $$
  defines a group homomorphism in $\SS(A)$. Conversely,  if $\tilde p' \in \SS(A)$,   then there exists a group homomorphism $f: A \to \hat G^F$ such that 
$i (f(a))\tilde p'(a) =  \tilde p(a)$ for all $a \in A$. In particular, if  $\tilde p'(a) = 
\sum_{g \in G} t'_a(g) e_g a$ for $a \in A$, then 
$$
t'_a = \frac{t_a}{\nu(a) f(a)}
$$
and $\nu'=\nu f \in \SS(A)$. Therefore, $\tilde p' = \tilde p_{\nu'}$.
 
 The cleft object $\ol c=(F/A, \ol g,
  \ol \theta)$  and morphism $\pi$ contructed in the proof of Theorem \ref{thmcext} are \emph{not}  unique.\ The definition of $\chi_x(g)$ is determined by
  the choice of the section $\ol r: \ol F\to F$ of $\pi_{\ol F}$  and   $\nu \in \BB(A)$.
  If $\nu' \in \BB(A)$, then $\nu'=\nu f$ for some group homomorphism   $f: A \to \hat G^F$.
  Thus, the corresponding 
  $$
  \chi'_x(g) = f(xr(x)\inv)(g)\chi_x(g)\,.\
  $$
  This implies $\ol c'= (F/A, \ol\g', \ol \theta')$ where $\ol \g' = \ol \g$ but
  $$
  \ol\theta'_g(\ol x, \ol y) =\frac{\ol \theta_g(\ol x, \ol y)}{f(r(x)r(y)r(xy)\inv)(g)} \, .
  $$
  Therefore, $\ol c$ as well as $\pi$ can be altered by the choice of any group homomorphism $f:A \to \hat{G}^F$ for
  a given section $\ol r : \ol F \to F$ of $\pi_{\ol F}$.
\end{remark}

\section{Cleft objects for the twisted quantum double $D^\w(G)$}\label{DG}
Consider the right action of a finite group $F=G$ on itself by conjugation with $\w \in Z^3(G, \k\x)$ a normalized
3-cocycle.\ We will write $x^g=g\inv xg$.\ There is a \emph{natural} cleft object $c_\w=(G, \g, \theta)$  of
$\k_\w^G$
 given by
\begin{equation}\label{eq:w_cleftdatum}
  \g_g(x,y)=\frac{\w(x,y, g)\w(g, x^g, y^g)}{\w(x, g, y^g)}, \quad \theta_g(x,y) = \frac{\w(g, x,y)\w(x, y ,
  g^{xy})}{\w(x, g^x, y)}\,.
\end{equation}
Note that $\g_z =\theta_z$ for any $z \in Z(G)$.\
The associated quasi-Hopf algebra $D_\k^\w(G)=\k^G_\w \#_{c_\w} \k G$ of this natural cleft object $c_\w$ is the
\emph{twisted quantum double}
 of $G$ [DPR]. From now on, we simply abbreviate $D_\k^\w(G)$ as $D^\w(G)$ when $\k$ is the field of complex numbers
 $\BC$.\

 For the cleft object $c_\w$, we can characterize the $c_\w$-central elements in the following result
 (cf.\ Lemma \ref{l:subext1}).
\begin{prop}\label{p:central1}
  The $c_\w$-center $Z_{c_\w}(G)$ is given by
  $$
  Z_{c_\w}(G) = Z(G)\cap G^\g.
  $$
 The group $\G_0(D^\w(G))$ of central group-like elements of $D^\w(G)$ is the middle term of the 
 short exact sequence
  $$
  1 \to \hat{G} \xrightarrow{i} \G_0(D^\w(G)) \xrightarrow{p} Z(G) \cap G^\g \to 1\,.
  $$
  In addition, if $H^2(G, \k\x)$ is trivial, then $Z(G) = Z_{c_\w}(G)$.
\end{prop}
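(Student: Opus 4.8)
The plan is to prove the three assertions in turn, with essentially all the work in the first; the other two then follow formally from it and from Lemma~\ref{l:subext1}. For the equality $Z_{c_\w}(G)=Z(G)\cap G^\g$, the inclusion ``$\subseteq$'' holds for every cleft object by Remark~\ref{r:c-central}, so only ``$\supseteq$'' needs an argument. Given $a\in Z(G)\cap G^\g$, since $a$ is $\g$-trivial one may fix $t_a\in C^1(G,\k\x)$ with $\delta t_a=\g_a$, and since $a$ is central one has $g\triangleleft a=g$ for all $g\in G$; by Remark~\ref{r:c-central} it then remains only to verify
$$
t_a(g)\,\theta_g(a,y)=t_a(g\triangleleft y)\,\theta_g(y,a)\qquad(g,y\in G).
$$
The observation that makes this tractable is that the identity does not depend on the choice of $t_a$. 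Writing $g^y=g\triangleleft y=y\inv gy$ and using only $\delta t_a=\g_a$ together with $t_a(1)=1$, one solves successively for $t_a(gy)$, $t_a(y\inv)$ and then $t_a(g^y)$ to obtain the closed formula (with the convention $\delta t(g,h)=t(g)t(h)t(gh)\inv$)
$$
\frac{t_a(g)}{t_a(g\triangleleft y)}=\frac{\g_a(y\inv,gy)\,\g_a(g,y)}{\g_a(y,y\inv)}\,.
$$
Substituting this into the displayed identity eliminates $t_a$ and turns it into a relation purely among the cleft data of $c_\w$, namely $\g_a(y\inv,gy)\,\g_a(g,y)/\g_a(y,y\inv)=\theta_g(y,a)/\theta_g(a,y)$.

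I would then verify this $t_a$-free relation by expanding both sides with the explicit formulas \eqref{eq:w_cleftdatum}, using that $a\in Z(G)$ gives $g^a=g$, $g^{ay}=g^{ya}=g^y$ and $gy=y\,g^y$. After cancellation it reduces to an identity among normalized values of $\w$, roughly of the shape
$$
\w(y\inv,yu,a)\,\w(a,y\inv,yu)\,\w(y,a,y\inv)\,\w(y,u,a)\,\w(a,y,u)=\w(y\inv,a,yu)\,\w(y,y\inv,a)\,\w(a,y,y\inv)\,\w(y,a,u)
$$
with $u=g^y$, which follows from the $3$-cocycle relation for $\w$ by a bounded number of applications. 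This is the only genuinely computational point of the proof, and the main obstacle: organizing the cocycle bookkeeping cleanly, rather than by brute expansion, is where the care is needed. A useful check along the way is the case $g=1$, where the relation collapses to $\g_a(y\inv,y)=\g_a(y,y\inv)$, which is true because $\g_a$ is an ordinary $2$-cocycle (as \eqref{eq:2cocycle12} degenerates to the $2$-cocycle condition when $x=a$ is central). Granting the identity, $\sum_{g\in G}t_a(g)e_g a\in\G_0(D^\w(G))$, so $a\in Z_{c_\w}(G)$; this proves $Z_{c_\w}(G)=Z(G)\cap G^\g$.

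For the second assertion, with $F=G$ acting by conjugation every $\chi\in\hat G$ is invariant, $\chi(g\triangleleft x)=\chi(x\inv gx)=\chi(g)$, so $\hat G^F=\hat G$; combining this with $Z_c(F)=Z_{c_\w}(G)=Z(G)\cap G^\g$ just proved, the exact sequence \eqref{eq:extension0} of Lemma~\ref{l:subext1} specializes immediately to the stated short exact sequence for $\G_0(D^\w(G))$. For the last assertion, if $a\in Z(G)$ then (as recalled above) $\g_a\in Z^2(G,\k\x)$; when $H^2(G,\k\x)=1$ this forces $\g_a\in B^2(G,\k\x)$, i.e.\ $a\in G^\g$, whence $Z(G)\subseteq G^\g$, so $Z(G)\cap G^\g=Z(G)$ and $Z_{c_\w}(G)=Z(G)$ by the first assertion.
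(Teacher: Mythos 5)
Your proposal is correct, and for the two formal assertions (the exact sequence, via $\hat G^G=\hat G$ and Lemma \ref{l:subext1}, and the $H^2$ statement) it coincides with the paper's argument; the difference lies in how you prove the key inclusion $Z(G)\cap G^\g\subseteq Z_{c_\w}(G)$. You eliminate $t_a$ by solving the coboundary relations, obtaining the (correct) formula $t_a(g)/t_a(g^y)=\g_a(g,y)\,\g_a(y^{-1},gy)/\g_a(y,y^{-1})$, and are then left with a $t_a$-free identity relating $\g_a$ and $\theta$, which you only sketch (``roughly of the shape'', ``a bounded number of applications'' of the cocycle relation). That identity is in fact true and the deferred verification is easier than you anticipate: one application of the $2$-cocycle condition for $\g_a$ (valid because $a$ is central, by \eqref{eq:2cocycle12}), namely $\g_a(y,y^{-1})=\g_a(y^{-1},gy)\,\g_a(y,g^y)$, reduces it to $\theta_g(y,a)/\theta_g(a,y)=\g_a(g,y)/\g_a(y,g^y)$; since $\theta_a=\g_a$ for central $a$, the right-hand side equals $\theta_a(g,y)/\theta_a(y,g^y)$, and after substituting \eqref{eq:w_cleftdatum} (using $g^a=g$, $g^{ay}=g^{ya}=g^y$) the two sides coincide \emph{term by term}, with no use of the $3$-cocycle identity at all. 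This is exactly the paper's route: for $z\in Z(G)\cap G^\g$ one has $\theta_z=\g_z=\delta t_z$, so the coboundary relation immediately gives $\theta_z(y,g^y)/\theta_z(g,y)=t_z(g^y)/t_z(g)$ (because $y\,g^y=gy$), and the identity $\theta_g(z,y)/\theta_g(y,z)=\theta_z(y,g^y)/\theta_z(g,y)$ follows directly from \eqref{eq:w_cleftdatum}. In short, the paper's choice of which ratio to compute makes the cocycle bookkeeping you flag as the main obstacle disappear, while your version, though sound, leaves precisely that computation unfinished; to complete your write-up you should either carry it out or reorganize it as above using $\theta_a=\g_a$.
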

\begin{proof}
The inclusion $Z_{c_\w}(G) \subseteq Z(G)\cap G^\g$ follows directly from  Remark \ref{r:c-central}.\ Suppose $z \in
Z(G) \cap G^\g$ and choose $t_z \in C^1(G, \k\x)$ so that $\delta t_z =\g_z$.\ Since $z \in Z(G)$, $\theta_z=\g_z$
and so $\theta_z =\delta t_z$.\ This implies
$$
\frac{\theta_z(y, g^y)}{\theta_z(g, y)} = \frac{t_z(g^y)}{t_z (g)}\ \ (g, y \in G).
$$
 It follows directly from the definition \eqref{eq:w_cleftdatum} of $\theta$ that
$$
\frac{\theta_g(z,y)}{\theta_g(y,z)}=\frac{\theta_z(y, g^y)}{\theta_z(g, y)}\,.
$$
Thus, we have
$$
t_z(g) \theta_g(z,y) = t_z(g^y) \theta_g(y,z)\ \ (g, y \in G).
$$
 It follows from Remark \ref{r:c-central} that $z \in Z_{c_\w}(G)$.\ Since $\hat G = \hat G^G$, the exact sequence
 follows from Lemma \ref{l:subext1}.

\medskip
Finally, if  $H^2(G, \k\x)$ is trivial and $z \in Z(G)$,  then $\g_z \in B^2(G, \k\x)$ and therefore
$z \in G^{\gamma}$.\ The equality
$Z(G)=Z(G)\cap G^\g = Z_{c_\w}(G)$ follows.
\end{proof}
\begin{defn}
  In light of Theorem \ref{thmcext}, for the canonical cleft object $c_\w = (G, \g, \theta)$ of $\k_\w^G$, a subgroup  $A \subseteq
  Z(G)$ is called $\w$-\emph{admissible} if $A$ satisfies one of the conditions in Theorem \ref{thmcext}.\ The
  quasi-Hopf algebra $\k_\w^G \#_{\ol c_\w} \k \ol G$ of an associated cleft object $\ol c_\w =(\ol G=G/A, \ol \g, \ol\theta)$ is simply denoted by $D_{r,\tilde p}^\w(G, A)$.\ It depends on the choice of  a section $r$ of $\pi_{\ol G}: G \to \ol G$ and a group homomorphism section $\tilde p:A \to \G_0(D^\w(G))$ of $p: p\inv(A) \to A$
 (cf.\ Remark \ref{r:non-uniqueness}). We  drop the subscripts
  $r,\tilde p$ if there is no ambiguity.
\end{defn}

\begin{remark}\label{r:difference}
 The quasi-Hopf algebra $D^\w(G, N)$ constructed in \cite{GM}, where $N \unlhd G$ and $\w$ is an inflation of a 3-cocycle of $G/N$, is a completely different construction from the one presented with the same notation in the preceding definition.\ Both are attempts to generalized the twisted quantum double construction by taking subgroups into account.
\end{remark}

\begin{example}
  Let $Q$ be the quaternion group of order $8$ and $A=Z(Q)$.\ Since $H^2(Q, \BC\x)=0$, $A$ is $c_{\w}$-central for all
  $\w \in Z^3(Q, \BC\x)$.\ Since $\hat{Q} \cong \BZ_2 \times \BZ_2$, the associated 2-cocycle $\b$ of the extension
  $$
  1 \to \hat{Q} \to \G_0(D^\w(Q)) \to Z(Q) \to 1
  $$
  has order 1 or 2.\ Thus, if $\w$ is a square of another 3-cocycle, $\b=1$ and so $A$ is $\w$-admissible.\ In fact,
  $A$ is $\w$-admissible for all 3-cocycles of $Q$ but the proof is a bit more complicated.
\end{example}

\section{Simple currents and $\w$-admissible subgroups}\label{Smodtc}
For simplicity,  we will mainly work over the base field $\BC$ for the remaining discussion.\ Again, we assume that $G$ is a finite group and $\w \in Z^3(G, \BC\x)$ a normalized 3-cocycle.\ An isomorphism class of a 1-dimensional $D^\w(G)$-module is also called a \emph{simple current} of $D^\w(G)$.\ The set $\SC(G, \w)$ of all simple currents of $D^\w(G)$ forms a finite group with respect to tensor product of $D^\w(G)$-modules.\ The inverse of a simple current $V$ is the left dual $D^\w(G)$-module $V^*$.\  $\SC(G, \w)$ is also called the group of invertible objects of $\Rep(D^\w(G))$ in some articles.\ Since the category $\Rep(D^\w(G))$ of finite-dimensional $D^\w(G)$-modules is a braided monoidal category, $\SC(G, \w)$ is  
\emph{abelian}.

\medskip
Recall that each simple module $V(K, t)$ of $D^\w(G)$ is characterized by a  conjugacy class $K$ of $G$ and a character $t$ of the twisted group algebra $\BC^{\theta_{g_K}}(C_G(g_K))$, where $g_K$ is a fixed element of $K$ and $C_G(g_K)$ is the centralizer of $g_K$ in $G$.\ The degree of the module $V(K, t)$ is equal to $|K|t(1)$ (\cite{DPR}, \cite{KMM}).

\medskip
Suppose $V(K, t)$ is 1-dimensional.\ Then $K=\{z\}$ for some $z \in Z(G)$ and $t$ is a 1-dimensional character of $\BC^{\theta_z}(G)$.\ Thus, for $g,h \in G$, we have
\begin{equation}\label{eq:theta_chi}
\theta_z(g,h)t(\widetilde{gh}) = t(\tilde{g})t(\tilde{h}),
\end{equation}
where $\tilde g$ denotes $g$ regarded as an element of $\BC^{\theta_z}(G)$.\
Defining $t(g) =t(\tilde g)$ for $g \in G$, we see that $\theta_z = \g_z =\delta t$ is a 2-coboundary of $G$.\ Hence $z \in G^\g \cap Z(G)$. By Proposition \ref{p:central1}, $z \in Z_{c_\w}(G)$.\  Conversely, if $z \in Z_{c_\w}(G)$, then there exists $t_z \in C^1(G, \BC\x)$ such that $\delta t_z = \g_z$.\ Then $V(z, t_z)$ is a 1-dimensional  $D^\w(G)$-module.\ Thus we have proved

\begin{lem} Let $K$ be a conjugacy class of $G$, $g_K$ a fixed element of $K$ and $t$ an irreducible character of  $\BC^{\theta_{g_K}}(C_G(g_K))$.\ Then $V(K,t)$ is a simple current of $D^\w(G)$ if, and only if, $K=\{z\}$ for some $z \in Z_{c_\w}(G)$ and $\delta t = \theta_z$. \qed
\end{lem}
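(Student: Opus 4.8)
The plan is to prove the characterization of one-dimensional simple $D^\w(G)$-modules by tracking carefully what the classification of simple modules $V(K,t)$ forces when the degree $|K|\,t(1)=1$, and conversely by exhibiting explicit one-dimensional modules attached to $c_\w$-central elements. This is essentially the content already assembled in the paragraph preceding the lemma statement, so the proof is a matter of organizing those observations cleanly.

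First I would handle the forward implication. Suppose $V(K,t)$ is a simple current, i.e.\ $\dim V(K,t)=1$. Since the degree equals $|K|\,t(1)$ and both factors are positive integers, we must have $|K|=1$ and $t(1)=1$. A conjugacy class of size one is a central element, so $K=\{z\}$ with $z\in Z(G)$, and then $C_G(g_K)=C_G(z)=G$, so $t$ is a one-dimensional character of the twisted group algebra $\BC^{\theta_z}(G)$. Writing $\tilde g$ for $g$ viewed in $\BC^{\theta_z}(G)$, the multiplicativity relation $\tilde g\,\tilde h = \theta_z(g,h)\,\widetilde{gh}$ together with $t$ being an algebra character gives $t(\tilde g)t(\tilde h)=\theta_z(g,h)\,t(\widetilde{gh})$, which is exactly \eqref{eq:theta_chi}. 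Setting $t(g):=t(\tilde g)$ and rewriting, this says $\theta_z(g,h)=t(g)t(h)t(gh)\inv=(\delta t)(g,h)$, so $\theta_z=\delta t\in B^2(G,\BC\x)$. Recalling from \eqref{eq:w_cleftdatum} that $\g_z=\theta_z$ for $z\in Z(G)$, we get $\g_z$ is a coboundary, hence $z\in G^\g\cap Z(G)$, and by Proposition~\ref{p:central1} this equals $Z_{c_\w}(G)$. Thus $z\in Z_{c_\w}(G)$ and $\delta t=\theta_z$, as required.

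Conversely, suppose $K=\{z\}$ with $z\in Z_{c_\w}(G)$ and $t\in C^1(G,\BC\x)$ with $\delta t=\theta_z$. Then $z$ is central, $C_G(z)=G$, and the relation $\delta t=\theta_z$ says precisely that $g\mapsto t(g)$ is a character of the twisted group algebra $\BC^{\theta_z}(G)$ in the sense of \eqref{eq:theta_chi}: indeed $t(g)t(h)=\theta_z(g,h)t(gh)$ unwinds to $(\delta t)(g,h)=\theta_z(g,h)$. Hence $V(\{z\},t)$ is a well-defined simple $D^\w(G)$-module of degree $|K|\,t(1)=1\cdot 1=1$, i.e.\ a simple current. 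Combining the two directions gives the stated equivalence.

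I do not expect a genuine obstacle here; the only points requiring a little care are the bookkeeping between the two uses of the symbol $t$ (as an abstract character of the twisted group algebra versus as a $1$-cochain on $G$) and the invocation of Proposition~\ref{p:central1} to identify $G^\g\cap Z(G)$ with $Z_{c_\w}(G)$ — but both are already spelled out in the excerpt. The mildest subtlety is noting that every $1$-cochain $t$ with $\delta t=\theta_z$ does yield an honest algebra character of $\BC^{\theta_z}(G)$ (rather than just a set map), which is immediate since the defining relations of the twisted group algebra are exactly the $2$-cocycle relations that $\theta_z$ satisfies.
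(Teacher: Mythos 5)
Your proof is correct and follows essentially the same route as the paper, whose argument is the paragraph preceding the lemma: a one-dimensional $V(K,t)$ forces $K=\{z\}\subseteq Z(G)$, the character relation $t(\tilde g)t(\tilde h)=\theta_z(g,h)t(\widetilde{gh})$ gives $\theta_z=\g_z=\delta t$, and Proposition \ref{p:central1} identifies $Z(G)\cap G^\g$ with $Z_{c_\w}(G)$; the converse likewise matches. Your extra bookkeeping (spelling out $|K|=1$, $t(1)=1$, and that a $1$-cochain with $\delta t=\theta_z$ really is an algebra character of $\BC^{\theta_z}(G)$) is a harmless elaboration of what the paper leaves implicit.
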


For simplicity, we denote the simple current $V(\{z\}, t)$ by $V(z, t)$.\ By \cite{DPR} or \cite{KMM} the character $\xi_{z, t}$ of $V(z, t)$ is given by
\begin{equation} \label{eq:character}
  \xi_{z, t}(e_g x) = \delta_{g,z}t(x)\,.
\end{equation}

Fix a family of normalized 1-cochains $\{t_z\}_{z \in Z_{c_\w}(G)}$ such that $\delta t_z =\g_z$.\ Then for any simple current $V(z, t)$ of $D^\w(G)$,  $t$ is a normalized 1-cochain of $G$ satisfying $\delta t = \theta_z$.\ Thus, $t = t_z \chi$ for some $\chi \in \hat{G}$.\ Therefore, 
$$
\SC(G, \w)=\{V(z, t_z\chi)\mid z \in Z_{c_\w}(G) \text{ and } \chi \in \hat{G}\}\,.
$$

\medskip
Suppose $V(z', t_{z'}\chi')$ is another simple current of $D^\w(G)$.\ Note that
\begin{equation}
  \g_x(z,z') = \theta_x(z,z') \text{ and } \g_z(x,y) = \theta_z(x,y)
\end{equation}
for all $z, z' \in Z(G)$ and $x,y \in G$.\ By considering the action of $e_g x$, we find
\begin{equation}\label{eq:multiplication}
  V(z, t_z\chi) \o V(z', t_{z'}\chi') = V(z z', t_{zz'}\b(z,z') \chi \chi')
\end{equation}
where $\b$ is given by \eqref{eq:2-cocycle}.\ Therefore,  we have an exact sequence
$$
\xymatrix{
1\ar[r] &\ar[r] \hat G \ar[r]^-{i} &\SC(G, \w) \ar[r]^-{p} & Z_{c_\w}(G) \ar[r] & 1
}
$$
of abelian groups, where $i: \chi \mapsto  V(1, \chi)$ and $p: V(z, t_z \chi) \mapsto z$.\ With the same fixed family $\{t_z\}_{z \in Z_{c_\w}(G)}$ of 1-cochains, $u(\chi, z) = \sum_{g \in G} t_z(g) e_g z$ ($z \in Z_{c_\w}(G)$, $\chi \in \hat{G}$) are all the central group-like elements of $D^\w(G)$.\ By Lemma \ref{l:subext1}, the 2-cocycle associated with the extension
$$
\xymatrix{
1\ar[r] &\ar[r] \hat G \ar[r]^-{i} & \G_0(D^\w(G)) \ar[r]^-{p} & Z_{c_\w}(G) \ar[r] & 1
}
$$
is also $\b$, and so we have proved
\begin{prop}\label{p:iso-extensions}
Fix a family $\{t_z\}_{z \in Z_{c_\w}(G)}$  in $C^1(G, \BC\x)$ such that $\delta t_z =\theta_z$.\ Then the
map $\zeta: \G_0(D^\w(G)) \to \SC(G, \w)$, $u(\chi, z)  \mapsto V(z, t_z\chi)$ for $\chi \in \hat{G}$ and $z \in Z_{c_\w}(G)$,  defines an isomorphism of the following extensions:
$$
\xymatrix{
1\ar[r] &\ar[r] \hat G \ar[r]^-{i} &\SC(G, \w) \ar[r]^-{p} & Z_{c_\w}(G) \ar[r] & 1& \\
1\ar[r] & \ar[r] \hat G \ar[u]^-{\id} \ar[r]^-{i}&\G_0(D^\w(G))\ar[u]^-{\zeta} \ar[r]^-{p} & Z_{c_\w}(G)\ar[u]^-{\id} \ar[r] & 1.&\quad\qed
}
$$
\end{prop}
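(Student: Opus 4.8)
The plan is to assemble the statement from the facts established just above the proposition: both rows are central extensions of the abelian group $Z_{c_\w}(G) = Z(G)\cap G^\g$ by $\hat G$, on which $Z_{c_\w}(G)$ acts trivially (it acts trivially on $G$, and $\hat G = \hat G^G$), and both are classified by the \emph{same} $2$-cocycle $\b$ of \eqref{eq:2-cocycle}. First I would check that $\zeta$ is a well-defined map. Fixing the family $\{t_z\}$ with $t_1 = 1$ (legitimate since $\delta t_1 = \theta_1 = 1$ forces $t_1\in\hat G$), the assignment $(\chi,z)\mapsto u(\chi,z) = \sum_{g\in G}\chi(g)t_z(g)e_g z$ is injective on $\hat G\times Z_{c_\w}(G)$ — apply $p$ to recover $z$, then divide by $t_z$ to recover $\chi$ — so every central group-like element of $D^\w(G)$ has a unique such expression and $\zeta(u(\chi,z)) := V(z, t_z\chi)$ is unambiguous; moreover $V(z, t_z\chi)$ really is a simple current by the characterization recalled above, since $\delta(t_z\chi) = \delta t_z = \theta_z$.

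Next I would show that $\zeta$ is a group homomorphism by comparing the two multiplication laws. On the domain, $i(\hat G)$ is central in $D^\w(G)$ and $i$ is multiplicative, so from the definition of the section cocycle $\b$ attached to $z\mapsto u(1,z)$ (Lemma \ref{lemmamixgps}(ii), Lemma \ref{l:subext1}) one obtains $u(\chi,z)\,u(\chi',z') = u\bigl(\chi\chi'\,\b(z,z'),\, zz'\bigr)$; this is the short computation using $(e_g z)(e_h z') = \delta_{g,h}\,\theta_g(z,z')\,e_g zz'$ for central $z$. On the codomain, \eqref{eq:multiplication} reads $V(z,t_z\chi)\o V(z',t_{z'}\chi') = V\bigl(zz',\, t_{zz'}\,\b(z,z')\,\chi\chi'\bigr)$. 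Applying $\zeta$ to the first identity and matching it against the second gives $\zeta\bigl(u(\chi,z)u(\chi',z')\bigr) = \zeta(u(\chi,z))\o\zeta(u(\chi',z'))$.

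Finally, commutativity of the ladder is immediate: $\zeta(i(\chi)) = \zeta(u(\chi,1)) = V(1,\chi) = i(\chi)$ for $\chi\in\hat G$ (this is where $t_1 = 1$ enters), and $p(\zeta(u(\chi,z))) = p(V(z,t_z\chi)) = z = p(u(\chi,z))$. Since $\zeta$ is then a homomorphism of short exact sequences inducing the identity on the two outer terms, a routine diagram chase (the short five lemma) forces $\zeta$ to be an isomorphism, which proves the proposition.

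I do not expect a serious obstacle here: the substantive inputs — the classification of simple $D^\w(G)$-modules, the fusion rule \eqref{eq:multiplication}, and the central-extension description of $\G_0(D^\w(G))$ from Proposition \ref{p:central1} and Lemma \ref{l:subext1} — are all already in hand, so what remains is organizational. The one point that deserves care is the bookkeeping that \emph{one and the same} cochain family $\{t_z\}$ is used to coordinatize both $\G_0(D^\w(G))$ and $\SC(G,\w)$; this is precisely what makes the two occurrences of $\b$ literally coincide, and hence what upgrades $\zeta$ from an abstract isomorphism of groups sitting inside both sequences to an isomorphism of the extensions themselves.
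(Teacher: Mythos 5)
Your proposal is correct and follows essentially the same route as the paper: the paper's proof is precisely the preceding computation that the fusion rule \eqref{eq:multiplication} and the multiplication of central group-like elements are governed by the same $2$-cocycle $\b$ of \eqref{eq:2-cocycle} built from one fixed family $\{t_z\}$, which is what you spell out before invoking the five lemma. Your added bookkeeping (well-definedness of $\zeta$, the normalization $t_1=1$, commutativity of the ladder) only makes explicit what the paper leaves implicit.
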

\begin{remark} The preceding proposition implies that these extensions depend only on the cohomology class of $\w$.\ In fact, if $\w$ and $\w'$ are cohomologous 3-cocycles of $G$, $Z_{c_\w} (G) = Z_{c_{\w'}}(G)$ but $\G(D^\w(G))$ and $\G(D^{\w'}(G))$ are not necessarily isomorphic. 
\end{remark}

In view of Proposition \ref{p:iso-extensions}, we will identify the group of simple currents $\SC(G,\w)$ with the group $\G_0(D^\w(G))$ of central group-like elements of $D^\w(G)$ under the map $\zeta$.\ In particular, we simply write the simple current $V(z, t_z \chi)$ as $u(\chi, z)$.

\medskip
The associativity constraint $\phi$ and the braiding $c$ of $\Rep(D^\w(G))$ define an Eilenberg-MacLane 3-cocycle $(\tilde\phi, d)$ of $\SC(G, \w)$ (\cite{EMac1, EMac2}) given by
\begin{multline}
  \tilde\phi\inv(u(\chi_1, z_1), u(\chi_2, z_2), u(\chi_3, z_3)) :=\\
\left( (u(\chi_1, z_1)\o u(\chi_2, z_2))\o u(\chi_3, z_3) \xrightarrow{\phi\cdot} u(\chi_1, z_1)\o u(\chi_2, z_2)\o u(\chi_3, z_3) \right)
\end{multline}
and
\begin{multline}
d(u(\chi_1, z_1)| u(\chi_2, z_2)):= c_{u(\chi_1, z_1), u(\chi_2, z_2)}=\\
\left(u(\chi_1, z_1)\o u(\chi_2, z_2) \xrightarrow{R\cdot} u(\chi_1, z_1)\o u(\chi_2, z_2) \xrightarrow{flip} u(\chi_2, z_2)\o u(\chi_1, z_1)\right),
\end{multline}
where $R=\sum_{g,h \in G} e_g \o e_h g$ is the universal $R$-matrix of $D^\w(G)$.\ By \eqref{eq:character}, one can compute directly that
\begin{eqnarray}
  \tilde\phi(u(\chi_1, z_1), u(\chi_2, z_2), u(\chi_3, z_3)) & = & \w(z_1, z_2, z_3)\,,\\
  d(u(\chi_1, z_1)| u(\chi_2, z_2)) & = & \chi_2(z_1) t_{z_2}(z_1)\,.
\end{eqnarray}

\medskip
The double braiding on $u(\chi_1, z_1)\o u(\chi_2, z_2)$ is then the scalar
$$
d(u(\chi_1, z_1)| u(\chi_2, z_2))\cdot d(u(\chi_2, z_2)| u(\chi_1, z_1)),
$$
which defines a symmetric bicharacter $(\cdot|\cdot)$ on $\SC(G, \w)$.\ Using \eqref{eq:character} to compute directly, we obtain
$$
(u(\chi_1, z_1)| u(\chi_2, z_2))=\chi_1(z_2) \chi_2(z_1) t_{z_2}(z_1)t_{z_1}(z_2)
$$
for all $u(\chi_1, z_1), u(\chi_2, z_2) \in \SC(G, \w)$.\ In general, $\SC(G, \w)$ is degenerate relative to this symmetric bicharacter $(\cdot|\cdot)$.\ However, there could be non-degenerate subgroups of $\SC(G,\w)$.

\begin{remark} \label{r:modularity}
It follows from \cite[Cor 7.11]{MugerII03} or \cite[Cor. 2.16]{Muger3} that a subgroup $A\subseteq \SC(G, \w)$ is non-degenerate if, and only if, the full subcategory $\AA$ of $\Rep(D^\w(G))$ generated by $A$ is a modular tensor category.
\end{remark}

We now assume $A$ is an $\w$-admissible subgroup of $G$.  Let $\nu$ be a normalized cochain in $C^1(A, \hat{G})$ such that $\b(a,b) =  \nu(a) \nu(b) \nu(ab)\inv$ for all $a, b \in A$.\ Therefore, by Remark \ref{r:non-uniqueness},
the assignment $\tilde p_\nu:a \mapsto u(\nu(a)\inv, a)$ defines a group monomorphism from $A$ to $\SC(G,\w)$ which is also a section of $p: p\inv(A) \to A$.\ Hence $A$ admits a bicharacter $(\cdot| \cdot)_\nu$ via the restriction of $(\cdot| \cdot)$ to $\tilde p_\nu(A)$.\ In particular,
\begin{equation}\label{eq:induced-bicharacter}
  (a| b)_\nu= (\tilde p_\nu(a)| \tilde p_\nu(b)) = \frac{t_b(a) t_a(b)}{\nu(b)(a)\nu(a)(b)}\,.
\end{equation}
Obviously, $(\cdot|\cdot)_\nu$ is non-degenerate if, and only if, $\tilde p_\nu(A)$ is a non-degenerate subgroup of $\SC(G,\w)$.\ On the other hand, $\nu$ also defines the quasi-Hopf algebra $D^\w(G, A)$ and a surjective quasi-Hopf algebra homomorphism $\pi_\nu: D^\w(G) \to D^\w(G, A)$.\ In particular, $\Rep(D^\w(G, A))$ is a tensor (full) subcategory of $\Rep(D^\w(G))$, so it inherits the braiding $c$ of $\Rep(D^\w(G))$.\ We can now state the main theorem in this section.

\begin{thm}\label{t:modular} 
Let $A$ be an $\w$-admissible subgroup of $G$, $\nu$  a normalized cochain in $C^1(A, \hat{G})$ , and $\tilde p_\nu:A \to \SC(G,\w)$ the associated group monomorphism. Then $$
c_{\tilde p_\nu(a), V}\circ c_{V, \tilde p_\nu(a)} = \id_{V \o \tilde p_\nu(a)}
$$
 for all $a \in A$ and irreducible $V \in \Rep(D^\w(G, A))$.\ Moreover, $\Rep(D^\w(G,A))$ is a modular tensor category if, and only if, the bicharacter $(\cdot|\cdot)_\nu$ on $A$ is non-degenerate.
\end{thm}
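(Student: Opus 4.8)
The plan is to identify, inside the modular category $\Rep(D^\w(G))$, exactly which objects constitute the full subcategory $\Rep(D^\w(G,A))$ (pulled back along the surjection $\pi_\nu\colon D^\w(G)\to D^\w(G,A)$). The key computation is that $\pi_\nu(\tilde p_\nu(a))=1$ for each $a\in A$: writing $\tilde p_\nu(a)=\sum_{g\in G}\frac{t_a(g)}{\nu(a)(g)}e_g a$ and recalling from the construction of $\pi_\nu$ in the proof of Theorem \ref{thmcext} that $\pi_\nu(e_g x)=\chi_x(g)e_g\ol x$ with $\chi_x$ as in \eqref{eq:isomorphism-scalars}, one has $\chi_a(g)=\nu(a)(g)/t_a(g)$ (since the section $r$ sends $\ol a$ to $1$) and $\ol a=\ol 1$, whence $\pi_\nu(\tilde p_\nu(a))=\sum_g e_g=1$. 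Thus every $V\in\Rep(D^\w(G,A))$ carries a trivial action of each $\tilde p_\nu(a)$. For the reverse inclusion I would check that $\ker\pi_\nu$ is precisely the two-sided ideal generated by $\{\tilde p_\nu(a)-1:a\in A\}$ --- a short dimension count, using that each $\chi_x(g)$ is a nonzero scalar, or alternatively the Frobenius--Perron dimension identity $\operatorname{FPdim}\Rep(D^\w(G,A))=\dim D^\w(G,A)=|G|^2/|A|$. Either way, $\Rep(D^\w(G,A))$ is exactly the full subcategory of those $V\in\Rep(D^\w(G))$ on which every $\tilde p_\nu(a)$, $a\in A$, acts as $\id_V$.

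For the first assertion I would compute the double braiding $c_{\tilde p_\nu(a),V}\circ c_{V,\tilde p_\nu(a)}$ straight from the universal $R$-matrix $R=\sum_{g,h}e_g\o e_h g$, exactly in the style of the computations of $d$ and $(\cdot|\cdot)$ preceding the theorem (recall the braiding is $c_{X,Y}=\tau_{X,Y}\circ(R\cdot(-))$, with no associator correction in the braiding morphism itself). Writing $\tilde p_\nu(a)=\BC\ell$ with $e_a\ell=\ell$ and $V=\bigoplus_k e_kV$, one gets for $v\in e_kV$ that $c_{V,\tilde p_\nu(a)}(v\o\ell)=s(k)\,(\ell\o v)$ and $c_{\tilde p_\nu(a),V}(\ell\o v)=(a\cdot v)\o\ell$, where $s=t_a\,\nu(a)\inv$ is the cochain with $\tilde p_\nu(a)=V(a,s)$ and $a\cdot v\in e_kV$ because $a$ is central in $G$. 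Composing,
$$
\bigl(c_{\tilde p_\nu(a),V}\circ c_{V,\tilde p_\nu(a)}\bigr)(v\o\ell)=s(k)\,(a\cdot v)\o\ell=(\tilde p_\nu(a)\cdot v)\o\ell .
$$
Since $\tilde p_\nu(a)$ acts as $\id_V$ on $V\in\Rep(D^\w(G,A))$ by the first paragraph, this operator is $\id_{V\o\tilde p_\nu(a)}$, which proves the first claim. (Specializing $V=\tilde p_\nu(b)$ recovers the formula \eqref{eq:induced-bicharacter} for $(a|b)_\nu$, a useful consistency check.)

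The displayed identity is in fact an equivalence: $V\in\Rep(D^\w(G))$ is centralized by $\tilde p_\nu(a)$ if and only if $\tilde p_\nu(a)$ acts as $\id_V$. As the simple objects of the pointed fusion subcategory $\CC:=\langle\tilde p_\nu(A)\rangle$ of $\DD:=\Rep(D^\w(G))$ are precisely the $\tilde p_\nu(a)$, $a\in A$, combining this with the first paragraph identifies $\Rep(D^\w(G,A))$ with the M\"uger centralizer $\CC'$ of $\CC$ in $\DD$. Here $\DD$ is a modular tensor category \cite{DPR}; $D^\w(G)$ is semisimple, hence so is its quotient $D^\w(G,A)$, so $\CC'=\Rep(D^\w(G,A))$ is a ribbon --- hence premodular --- fusion category with braiding and twist inherited from $\DD$. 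I would then apply M\"uger's double-centralizer theorem (\cite{MugerII03}, \cite{Muger3}): $\CC''=\CC$, so the symmetric center of $\CC'$ equals $\CC'\cap\CC''=\CC\cap\CC'$, which is the symmetric center of $\CC$; since a premodular category is modular exactly when its symmetric center is trivial, $\CC'$ is modular if and only if $\CC$ is. By Remark \ref{r:modularity}, $\CC$ is modular if and only if $\tilde p_\nu(A)$ is non-degenerate in $\SC(G,\w)$ for $(\cdot|\cdot)$, which, as noted just before the theorem, holds if and only if $(\cdot|\cdot)_\nu$ is non-degenerate on $A$. Chaining the equivalences yields the second claim.

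I expect the main obstacle to be the first paragraph --- in particular the reverse inclusion, i.e. showing that a $D^\w(G)$-module on which every $\tilde p_\nu(a)$ acts trivially actually factors through $\pi_\nu$, which requires knowing $\ker\pi_\nu$ explicitly (or the Frobenius--Perron dimension count, together with the M\"uger identity $\operatorname{FPdim}\CC\cdot\operatorname{FPdim}\CC'=\operatorname{FPdim}\DD$). By contrast, the $R$-matrix computation of the second paragraph is routine given the analogous calculations the paper has already performed, and the third paragraph is a formal application of established facts about modular categories and their fusion subcategories.
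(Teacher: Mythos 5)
Your proposal is correct and follows essentially the same route as the paper's proof: the same $R$-matrix computation (using $\chi_a=\nu(a)/t_a$, i.e.\ $\pi_\nu(\tilde p_\nu(a))=1$) shows the double braiding with $\tilde p_\nu(a)$ is given by the action of the central group-like element $\tilde p_\nu(a)$, and the identification of $\Rep(D^\w(G,A))$ with the M\"uger centralizer of the pointed subcategory generated by $\tilde p_\nu(A)$, together with M\"uger's dimension and double-centralizer theorems, yields the modularity equivalence exactly as in the paper. The only cosmetic differences are that the paper reduces the first assertion to the regular representation of $D^\w(G,A)$ by naturality and obtains the equality of categories from the formula $\dim C_\CC(\AA)=\dim\CC/\dim\AA$ rather than your explicit description of $\ker\pi_\nu$, and it invokes \cite[Thm.\ 3.2, Cor.\ 3.5]{Muger3} directly instead of your (equivalent) M\"uger-center argument.
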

\begin{proof}
  Since a braiding $c_{U,V} : U \o V \to V \o U$ is a natural isomorphism and the regular representation $U$ of $D^\w(G,A)$ has every irreducible $V \in \Rep(D^\w(G,A))$ as a summand, it suffices to show that
  $$
  c_{\tilde p_\nu(a), U} \circ c_{U,\tilde p_\nu(a)} = \id_{U\o \tilde p_\nu(a)}
  $$
  for all $a \in A$.\ Let $\ol c_\w = (G/A=\ol G, \ol \theta, \ol \gamma)$ be an associated cleft object of $\BC_\w^G$ and $\pi_\nu: D^\w(G) \to D^\w(G,A)$ an epimorphism of quasi-Hopf algebras constructed in the proof of Theorem \ref{thmcext} using $\nu$.\ In particular, $\pi_\nu(e_g x) = \chi_x(g)\, e_g \ol x$ for all $g, x \in G$ where $\ol x$ denotes the coset $xA$ and the scalar $\chi_x(g)$ is given by
  \eqref{eq:isomorphism-scalars}.

\medskip
  Let $\1_{\tilde p_\nu(a)}$ denote a basis element of $\tilde p_\nu(a)=V(a, t_a \nu(a)\inv)$. Then, by \eqref{eq:character},
  $$
  e_g x \cdot  \1_{\tilde p_\nu(a)} = \delta_{g,a} \frac{t_a(x)}{\nu(a)(x)}\, \1_{\tilde p_\nu(a)}\,.
  $$
  Note that we can take $U = D^\w(G,A)$ as a $D^\w(G)$-module via $\pi_\nu$, and so
  $$
  e_g x \cdot e_h \ol y = \pi_\nu(e_g x) e_h \ol y = \delta_{g^x, h} \,\chi_x(g)\, \ol \theta_g (\ol x, \ol y) e_g \ol{xy}
  $$
  for all $g, h, x,y \in G$. Since the  $R$-matrix of $D^\w(G)$ is given by $R=\sum_{g, h \in G} e_g \o e_h g$, we have
  \begin{eqnarray*}
     c_{\tilde p_\nu(a), U} \circ c_{U,\tilde p_\nu(a)} (e_g \ol y \o \1_{\tilde p_\nu(a)}) & = &
   R^{21} R\cdot (e_g \ol y \o \1_{\tilde p_\nu(a)}) \\
   &=& \frac{t_a(g)}{\nu(a)(g)} R^{21}\cdot (e_g \ol y \o \1_{\tilde p_\nu(a)}) \\
   &=& \frac{t_a(g)}{\nu(a)(g)} \chi_a(g) \ol \theta_g(\ol a, \ol y) \, e_g \ol y \o \1_{\tilde p_\nu(a)} \\
   &=&  e_g \ol y \o \1_{\tilde p_\nu(a)} \\
  \end{eqnarray*}
  for all $a \in A$.\ This proves the first assertion.
  
  \medskip
  Let $\AA$ be the full subcategory of $\CC=\Rep(D^\w(G))$ generated by $\tilde p_\nu(A)$.\ The first assertion of the theorem implies  that $\Rep(D^\w(G,A))$ is a full subcategory of the centralizer $C_{\CC}(\AA)$ of $\AA$ in $\CC$.\ Since $\dim \AA = |A|$ and $\Rep(D^\w(G))$ is a modular tensor category, by \cite[Thm. 3.2]{Muger3}, 
  $$
  \dim C_\CC(\AA) = \dim D^\w(G)/\dim \AA = |G|^2/|A| = \dim D^\w(G,A).
  $$
   Therefore,
  $$
  C_\CC(\AA) = \Rep(D^\w(G,A)) \text{ and }  C_\CC(\Rep(D^\w(G,A))) = \AA\,.
  $$
   By Remark \ref{r:modularity}, $\AA$ is a modular category if, and only if, $\tilde p_\nu(A)$ is non-degenerate subgroup of $\SC(G,\w)$; this is equivalent to the assertion that the bicharacter $(\cdot|\cdot)_\nu$ on $A$ is non-degenerate.\  It follows from \cite[Thm. 3.2 and Cor. 3.5]{Muger3} that $\AA$ is modular if, and only if, $C_\CC(\AA)$ is modular.\ This proves the second assertion.
\end{proof}

The choice of cochain $\nu \in C^1(A, \hat{G})$ in the preceding theorem determines an embedding $\tilde p_\nu$ of $A$ into $\SC(G, \w)$.\ Therefore, the degeneracy of $\tilde p_\nu(A)$ in $\SC(G, \w)$ depends on the choice of $\nu$.\ However, the degeneracy of $\tilde p_\nu(A)$ can also be independent of the choice of $\nu$ in some situations.\ Important examples of this are contained in the next result. 

\begin{lem} If $A$ is an $\w$-admissible subgroup of $G$ such that $A\cong \BZ_2$ or $A \le [G,G]$.
 Then the bicharacter  $(\cdot|\cdot)_\nu$ on $A$ is independent of the choice of $\nu$.
\end{lem}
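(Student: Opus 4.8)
The plan is to separate the part of the bicharacter that genuinely depends on $\nu$ and to check that it is trivial under either hypothesis. Recall from \eqref{eq:induced-bicharacter} that, for the fixed family $\{t_z\}_{z\in Z_{c_\w}(G)}$ chosen once and for all,
\[
(a|b)_\nu=\frac{t_b(a)\,t_a(b)}{\nu(b)(a)\,\nu(a)(b)}\,,
\]
so the only $\nu$-dependent ingredient sits in the denominator. Let $\nu,\nu'\in\BB(A)$ be two admissible choices, in the notation of Remark \ref{r:non-uniqueness}. By that remark, $\nu'=\nu f$ for some group homomorphism $f\colon A\to\hat G^G$; since $F=G$ acts on $G$ by conjugation, the conjugation action on $\hat G$ is trivial, so $\hat G^G=\hat G$ and $f\colon A\to\hat G$ is just a homomorphism into the character group. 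Using $\nu'(a)(b)=\nu(a)(b)\,f(a)(b)$ we obtain
\[
(a|b)_{\nu'}=\frac{(a|b)_\nu}{f(a)(b)\,f(b)(a)}\,,
\]
so it suffices to show that $f(a)(b)\,f(b)(a)=1$ for all $a,b\in A$.

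I would then treat the two cases directly. If $A\le[G,G]$, every $\chi\in\hat G=\Hom(G,\BC\x)$ is trivial on the commutator subgroup and hence on $A$; thus $f(a)(b)=1$ for all $a,b\in A$, and the claim follows. If $A\cong\BZ_2$, write $A=\{1,a_0\}$. Whenever $a=1$ or $b=1$ the factor $f(a)(b)f(b)(a)$ equals $1$, because $f(1)$ is the trivial character and every character takes the value $1$ at the identity; the only remaining pair is $(a_0,a_0)$, where the factor is $f(a_0)(a_0)^2$. Since $f$ is a homomorphism and $a_0^2=1$, we have $f(a_0)^2=f(a_0^2)=f(1)=1$ in $\hat G$, i.e.\ $f(a_0)(g)^2=1$ for every $g\in G$; in particular $f(a_0)(a_0)^2=1$. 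Hence $(a|b)_{\nu'}=(a|b)_\nu$ for all $a,b\in A$ in both cases.

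I do not expect a real obstacle here: the argument is essentially bookkeeping once the $\nu$-dependence has been pinned down via Remark \ref{r:non-uniqueness} and the identification $\hat G^G=\hat G$. The one point requiring a little care is that the family $\{t_z\}$ is held fixed, so no extra $\nu$-dependence is hidden in the numerator of \eqref{eq:induced-bicharacter}; after that, both cases collapse to the elementary facts that a homomorphism $A\to\hat G$ has image of exponent dividing $2$ when $A\cong\BZ_2$, resp.\ image consisting of characters trivial on $[G,G]$ when $A\le[G,G]$, which forces the symmetrized pairing $f(a)(b)f(b)(a)$ to equal $1$.
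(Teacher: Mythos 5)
Your proposal is correct and follows essentially the same route as the paper: write $\nu'=\nu f$ for a homomorphism $f\colon A\to\hat G$, observe $(a|b)_{\nu'}=(a|b)_\nu\big/\bigl(f(a)(b)\,f(b)(a)\bigr)$, and kill the correction factor using triviality of characters on $[G,G]$ in one case and $f(a_0)^2=1$ in the $\BZ_2$ case. The only (harmless) extra touch is your explicit remark that $\hat G^G=\hat G$ for the conjugation action, which the paper uses implicitly.
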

\begin{proof}
  Suppose  $\nu' \in C^1(A, \hat{G})$ is another cochain satisfying the condition of Theorem \ref{t:modular}.\ Then there is a group homomorphism $f: A \to \hat{G}$  such that $\nu'(a)(b) = f(a)(b)\nu(a)(b)$.\ Thus the associated bicharacter $(\cdot|\cdot)_{\nu'}$ is given by
  \begin{multline}
    (a|b)_{\nu'}=f(a)(b)\inv\nu(a)(b)\inv f(b)(a)\inv\nu(b)(a)\inv t_a(b) t_b(a) \\ = f(a)(b)\inv f(b)(a)\inv (a|b)_\nu.
  \end{multline}
  If $A \subseteq G'$, then $f(a)(b) = f(b)(a)=1$ for all $a, b \in A$, whence $(a|b)_{\nu}=(a|b)_{\nu'}$.
  
\medskip
  On the other hand, if $A$ is a group of order 2 generated by $z$ then $f(z)(z)^2=1$, so that
  $$
  (z,z)_{\nu'} = f(z)(z)^2 (z|z)_\nu = (z|z)_\nu\,. \qedhere
  $$
\end{proof}

\end{document}